%% @texfile{
%%     filename="RelOscJacMExt.tex",
%%     version="1.1",
%%     date="Jul-2012",
%%     filetype="LaTeX2e",
%%     journal="",
%%     copyright="Copyright (C) K. Ammann"
%%     }

\documentclass[reqno]{amsart}
\usepackage{amsmath,amsthm,amscd,amssymb,latexsym,upref,stmaryrd,pdfsync,amsfonts,cite}
\usepackage[colorlinks=true,naturalnames=true,plainpages=false,pdfpagelabels=true]{hyperref}

\allowdisplaybreaks \numberwithin{equation}{section}

\makeatletter

\newcommand{\Romnumb}[1]{\expandafter\@slowromancap\romannumeral #1@}
\makeatother

\newcommand*{\mailto}[1]{\href{mailto:#1}{\nolinkurl{#1}}}

\newtheorem{theorem}{Theorem}[section]
\newtheorem{definition}[theorem]{Definition}
\newtheorem{lemma}[theorem]{Lemma}
\newtheorem{corollary}[theorem]{Corollary}
\newtheorem{remark}[theorem]{Remark}

\newcommand{\R}{{\mathbb R}}
\newcommand{\Z}{{\mathbb Z}}
\newcommand{\be}{\begin{equation}}
\newcommand{\ee}{\end{equation}}
\newcommand{\ti}{\tilde}
\newcommand{\floor}[1]{\lfloor#1 \rfloor}
\newcommand{\ceil}[1]{\lceil#1 \rceil}
\newcommand{\abs}[1]{\lvert#1\rvert}
\newcommand{\norm}[1]{\left\Vert#1\right\Vert}
\newcommand{\up}{(+1)}
\newcommand{\down}{(-1)}
\newcommand{\nc}{(0)}
\newcommand{\sig}{\sigma}
\newcommand{\lam}{\lambda}
\newcommand{\RM}[1]{\MakeUppercase{\romannumeral #1}}

\renewcommand{\ge}{\geqslant}
\renewcommand{\le}{\leqslant}

\numberwithin{equation}{section}

\parindent0em

\begin{document}

\title{Relative Oscillation Theory for Jacobi Matrices Extended}

\author[K. Ammann]{Kerstin Ammann}
\address{Faculty of Mathematics \\ University of Vienna \\ Nordbergstrasse 15\\ 1090 Wien\\ Austria}
\email{\mailto{Kerstin.Ammann@univie.ac.at}}
\urladdr{\url{http://www.mat.univie.ac.at/~kerstin/}}

\thanks{{\it Research supported by the Austrian Science Fund (FWF) under Grant No.~Y330}}

\keywords{Jacobi matrices, oscillation theory, Wronskian}
\subjclass[2000]{Primary 39A21, 47B36; Secondary 34C10, 34L05}

\begin{abstract}
We present a comprehensive treatment of relative oscillation theory for finite Jacobi matrices. We show that the difference of the number of eigenvalues of two Jacobi matrices in an interval equals the number of weighted sign-changes of the Wronskian of suitable solutions of the two underlying difference equations. Until now only the case of perturbations of the main diagonal was known. We extend the known results to arbitrary perturbations, allow any (half-)open and closed spectral intervals, simplify the proof, and establish the comparison theorem.
\end{abstract}

\maketitle

\section{Introduction}

Jacobi operators appear at numerous occasions in mathematics as well as in physical models. For example, they are intimately related to the theory of orthogonal polynomials or constitute as a simple one-band tight binding model in quantum mechanics. They can be viewed as the discrete analog of Sturm--Liouville operators and their investigation has many similarities with Sturm--Liouville theory. Moreover, spectral and inverse spectral theory for Jacobi operators plays a fundamental role in the investigation of the Toda lattice and its modified counterpart, the Kac--van Moerbeke lattice. For further information we refer, e.g., to \cite{jacop}.

Let $a,b \in \ell(\Z)=\{\varphi~|~\varphi:\Z\to\R\}$, where $a(n)<0$ holds for all $n\in\Z$. Then, the \emph{Jacobi matrix}
\be J=\begin{pmatrix}
 b(1) & a(1) & 0 & 0 & 0 \\
 a(1) & b(2) & \ddots & 0 & 0 \\
0 & & \ddots & & 0 \\
0 & 0 &	\ddots & b(N-2) & a(N-2) \\
0 & 0 & 0 & a(N-2) & b(N-1) \\
\end{pmatrix} \ee
is self-adjoint and $\sig(J)$ is real and simple. The corresponding \emph{Jacobi difference equation} \be\label{eq:jde} \tau u = z u, \ee
where
$$\begin{array}{lllcl}
\tau: &\ell(\Z) &\rightarrow \ell(\Z) && \\
&u(n) &\mapsto (\tau u)(n) &=& a(n) u(n+1) + a(n-1) u(n-1) + b(n) u(n) \\
&		 &										&=& \partial (a(n-1) \partial u(n-1)) + (b(n) + a(n) + a(n-1)) u(n),
\end{array}$$
$z\in\R$, and $\partial u(n)=u(n+1)-u(n)$ is the usual forward difference operator, is the discrete analog of the Sturm–-Liouville differential equation.

We call $u(z)$ a \emph{solution} of (\ref{eq:jde}) if $(\tau - z) u(z) = 0$ and $u(z)\not\equiv 0$. Whenever the spectral parameter is evident from the context we abbreviate $u=u(z)$. To any two initial values $u(n_0), u(n_0+1), n_0\in\Z$, there exists a unique 'solution' $u$ of (\ref{eq:jde}) which vanishes if and only if $(u(n_0),u(n_0+1))=(0,0)$. We exclude this case and thus, a solution of (\ref{eq:jde}) cannot have two consecutive zeros. We call $n$ a \emph{node} of $u$ if
\be u(n)=0 \quad\text{or}\quad a(n) u(n)u(n+1)>0 \ee
and say that a node $n$ of $u$ lies between $m$ and $l$ if either $m<n<l$ or if $n=m$ and $u(m)\ne 0$. The \emph{number of nodes of $u$ between $m$ and $l$} is denoted as $\#_{(m,l)}(u)$. 

From classical oscillation theory originating in the seminal work of Sturm from 1836 \cite{stu} we know that the $n$-th eigenfunction of a Sturm--Liouville operator has $n-1$ nodes. This also holds for eigensequences of Jacobi operators, cf.\ e.g.\ \cite{fort}, \cite{siosc}, or \cite{oscjac}. Our aim now is to show that the number of nodes of the Wronskian determinant of two (suitable) solutions $u_j(\lam_j)$ of $(\tau_j-\lam_j)u_j=0$, $j=0,1$, equals the difference of the number of eigenvalues of $J_0$ and $J_1$ in $(\lam_0,\lam_1)$.

In \cite{reloscjacm} (cf.\ also \cite{thesis}) Teschl and myself considered the special case $a_0=a_1$ which is now generalized to arbitrary perturbations. We still assume $a_0, a_1<0$. This is no restriction since altering the sign of one or more elements of $a$ does not affect the spectrum of the corresponding matrices, their similarity can easily be shown. Nevertheless, the signs of the solutions of the underlying difference equations depend on the signs of $a$ and therefore we assume $a<0$ to simplify (\ref{eq:countingmethod}).

The Wronskian is given by $W(u_0,u_1)\in\ell(\Z)$, where
\be W_n(u_0,u_1) = u_0(n) a_1(n) u_1(n+1) - u_1(n) a_0(n) u_0(n+1). \ee
We set
\be \label{eq:countingmethod}
\#_n(u_0,u_1) = \left\{\begin{array}{rl}
1	& \text{if } W_{n+1}(u_0,u_1) u_0(n+1) u_1(n+1) > 0 \text{ and } \\
	& \quad\text{either } W_n(u_0,u_1) W_{n+1}(u_0,u_1) < 0 \\ 
	& \quad\text{or } W_n(u_0,u_1) = 0 \text{ and } W_{n+1}(u_0,u_1) \neq 0 \\[2mm]
-1 & \text{if } W_n(u_0,u_1) u_0(n+1) u_1(n+1) > 0 \text{ and } \\
  & \quad\text{either } W_n(u_0,u_1) W_{n+1}(u_0,u_1) < 0 \\
  & \quad\text{or } W_n(u_0,u_1) \neq 0 \text{ and } W_{n+1}(u_0,u_1) = 0 \\[2mm]
0	& \text{otherwise}
\end{array}\right.
\ee
and say the Wonskian has a \emph{(weighted) node} at $n$ if $\#_n(u_0,u_1)\ne 0$. We denote the \emph{number of weighted nodes of the Wronskian between $m$ and $n$}, $m<n$, by
\be \#_{[m,n]}(u_0, u_1) = \sum_{j=m}^{n-1} \#_j(u_0,u_1) \ee
and set
\be\begin{array}{l}
\#_{(m,n]}(u_0, u_1) = \#_{[m,n]}(u_0, u_1) -
\begin{cases}
1&\text{if } W_m(u_0,u_1) = 0 \\
0&\text{otherwise,}
\end{cases} \\
\#_{[m,n)}(u_0, u_1) = \#_{[m,n]}(u_0, u_1) +
\begin{cases}
1&\text{if } W_n(u_0,u_1) = 0 \\
0&\text{otherwise,}
\end{cases} \\
\end{array}\ee
and
\be\begin{array}{l}
\#_{(m,n)}(u_0, u_1) \\
\quad = \#_{[m,n]}(u_0, u_1) -
\begin{cases}
1&\text{if } W_m(u_0,u_1) = 0 \\
0&\text{otherwise}
\end{cases}+\begin{cases}
1&\text{if } W_n(u_0,u_1) = 0 \\
0&\text{otherwise.}
\end{cases}
\end{array}\ee
Here we slightly changed the notation compared to \cite{reloscjacm}: $\#_{(m,n)}$ from \cite{reloscjacm} is now denoted as $\#_{(m,n]}$.

We find the following
\begin{theorem}[Relative Oscillation Theorem]\label{thm:main}
Let $a_0, a_1<0$, then,
\begin{align} \label{eq:main}
&E_{(-\infty,\lam_1)}(J_1) - E_{(-\infty,\lam_0]}(J_0) \nonumber \\
&\quad = \#_{(0,N]}(u_{0,+}(\lam_0),u_{1,-}(\lam_1)) = \#_{(0,N]}(u_{0,-}(\lam_0),u_{1,+}(\lam_1)) \\
\intertext{and}
&E_{(-\infty,\lam_1)}(J_1) - E_{(-\infty,\lam_0)}(J_0) \nonumber \\
&\quad = \#_{[0,N]}(u_{0,+}(\lam_0),u_{1,-}(\lam_1)) = \#_{(0,N)}(u_{0,-}(\lam_0),u_{1,+}(\lam_1)), \nonumber \\
&E_{(-\infty,\lam_1]}(J_1) - E_{(-\infty,\lam_0]}(J_0) \nonumber \\
&\quad = \#_{(0,N)}(u_{0,+}(\lam_0),u_{1,-}(\lam_1)) = \#_{[0,N]}(u_{0,-}(\lam_0),u_{1,+}(\lam_1)), \label{eq:main2} \\
&E_{(-\infty,\lam_1]}(J_1) - E_{(-\infty,\lam_0)}(J_0) \nonumber \\
&\quad = \#_{[0,N)}(u_{0,+}(\lam_0),u_{1,-}(\lam_1)) = \#_{[0,N)}(u_{0,-}(\lam_0),u_{1,+}(\lam_1)), \nonumber
\end{align}
where $E_\Omega(J)$ is the number of eigenvalues of $J$ in $\Omega\subseteq\R$ and $u_{-/+}$ are solutions fulfilling the left/right Dirichlet boundary condition of $J$, i.e.~$u_-(0)=u_+(N)=0$.
\end{theorem}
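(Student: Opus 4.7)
The starting point would be a discrete Lagrange/Green identity for the Wronskian. A direct calculation from (\ref{eq:jde}) and the definition of $W_n$ yields
\be
W_n - W_{n-1} = \bigl[(\lam_1-\lam_0)-(b_1(n)-b_0(n))\bigr] u_0(n) u_1(n) - (a_1(n-1)-a_0(n-1))\bigl[u_0(n-1)u_1(n)+u_0(n)u_1(n-1)\bigr],
\ee
which collapses to the identity employed in \cite{reloscjacm} when $a_0=a_1$, and is the essential new ingredient for arbitrary off-diagonal perturbations. The weighted node definition (\ref{eq:countingmethod}) is tailored so that the sign of $W_n-W_{n-1}$, combined with the signs of $u_0(n\pm 1)u_1(n\pm 1)$, produces a locally rigid counting function.

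Next I would argue by continuous deformation. Choose a path $s\mapsto(J_0^s,J_1^s,\lam_0^s,\lam_1^s)$ with $a_0^s,a_1^s<0$ throughout, connecting the target data at $s=1$ to a base configuration at $s=0$ in which $J_0^0=J_1^0$ and $\lam_0^0=\lam_1^0$ lies below the common spectrum; there both sides of (\ref{eq:main})--(\ref{eq:main2}) vanish, the RHS because the Wronskian of $u_{0,+}^0$ and $u_{1,-}^0$ is the classical constant, nonvanishing Wronskian of two linearly independent non-oscillating solutions of the same equation. Both sides are integer-valued and locally constant in $s$ away from the finite set of parameters at which $\lam_j^s\in\sig(J_j^s)$, so it suffices to check that they jump by the same signed amount at each such $s$.

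At such a moment, say $\lam_1^s\in\sig(J_1^s)$, the right Dirichlet solution $u_{1,-}^s(\lam_1^s)$ also satisfies the left boundary condition and is therefore proportional to $u_{1,+}^s(\lam_1^s)$; this forces $W_N=0$ at the critical parameter, and by the Green identity above together with (\ref{eq:countingmethod}) a single weighted node enters or leaves the count through $n=N$ in step with the eigenvalue of $J_1^s$ crossing $\lam_1^s$. The symmetric scenario at $n=0$ handles crossings of $\lam_0^s$. I expect the main technical difficulty to be verifying that the sign of each jump on the right matches that on the left (equivalently, that an eigenvalue entering versus leaving the spectral interval corresponds to $+1$ versus $-1$ in (\ref{eq:countingmethod})); this reduces to a local sign analysis of $W_{n-1},W_n,W_{n+1}$ and of $u_0,u_1$ near the boundary, made manageable by the identity above. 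Finally, the four variants in Theorem~\ref{thm:main} differ only by whether $W_0$ or $W_N$ is allowed to contribute, which is precisely the bookkeeping built into the definitions of $\#_{(0,N]}$, $\#_{[0,N]}$, $\#_{(0,N)}$, and $\#_{[0,N)}$; their four identities follow simultaneously from the same deformation by tracking when the corresponding boundary zero of $W$ is present.
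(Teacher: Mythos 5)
Your proposal takes a genuinely different route from the paper: the paper does \emph{not} deform operators at all. It counts eigenvalues via nodes of the Dirichlet solutions $s_{j,\pm}$ (Theorem~\ref{thm:numbev}), converts both the eigenvalue counts and the weighted Wronskian nodes into Pr\"ufer-angle data --- Lemma~\ref{lem:ev} gives $E_{(-\infty,\lam_1)}(J_1)-E_{(-\infty,\lam_0]}(J_0)$ as a difference of ceilings/floors of $\Delta_{u_0,u_1}=\theta_1-\theta_0$ at $0$ and $N$, while Lemma~\ref{lem:Wnodes} shows $\#_n(u_0,u_1)=\ceil{\Delta(n+1)/\pi}-\ceil{\Delta(n)/\pi}$ so that the node count telescopes to the same boundary expression --- and then simply equates the two. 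The introduction states explicitly that eliminating the interpolation step is one of the points of the paper, precisely because $a_0\le a_1$ does not imply an ordering of the operators.

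Beyond being a different route, your plan has a genuine gap at its crux. You assert that the right-hand sides are locally constant in $s$ away from the finitely many parameters where $\lam_j^s\in\sig(J_j^s)$, but this is exactly the hard part and nothing in your sketch establishes it. The quantity $\#_{[0,N]}(u_0,u_1)$ is a sum of combinatorially defined local contributions $\#_n$, and as $s$ varies, interior degeneracies occur: $W_n$ can vanish transiently at an interior $n$, a solution value $u_j(n+1)$ can pass through zero, and weighted nodes can migrate between neighbouring sites or be created in pairs. You must prove that all such interior events leave the total count unchanged and that only the boundary zeros $W_0=0$ (i.e.\ $\lam_0\in\sig(J_0)$) and $W_N=0$ (i.e.\ $\lam_1\in\sig(J_1)$) can change it --- and moreover that the sign of each such jump matches the sign of the eigenvalue crossing, which you yourself flag as unresolved. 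The paper's telescoping identity $\#_{[m,n]}(u_0,u_1)=\ceil{\Delta(n)/\pi}-\ceil{\Delta(m)/\pi}$ is precisely the statement that the count depends only on boundary data, and proving it (Lemma~\ref{lem:nodeiff2} through Lemma~\ref{lem:Wnodes}) is where essentially all the work of the paper lies; in your framework the corresponding interior-invariance and jump-sign analysis would be of comparable difficulty, so the proposal as written is a program rather than a proof. Your Green-identity display is consistent with the paper's (\ref{eq:greenW1}) and the vanishing of both sides at the base configuration is fine, but those are the easy parts.
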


Equation (\ref{eq:main}) generalizes Theorem~1.2 from \cite{reloscjacm} to different $a$'s. Analogous results for Sturm--Liouville operators were first given by Kr\"uger and Teschl \cite{relosc2,relosc}. For the case of Dirac operators see Stadler and Teschl in \cite{dirac}. For extensions to symplectic eigenvalue problems see Elyseeva \cite{el2010,el2011,el2012}.

In the sequel (Sections \ref{sec:wronski}--\ref{sec:wronskinodes}) we prove Theorem~\ref{thm:main} using the discrete Pr\"ufer transformation. Compared to \cite{reloscjacm,relosc2,relosc,dirac} we also present a simplified proof which eliminates the need to interpolate between operators. This is of particular importance in the present case, since $a_0 < a_1$ does not imply the corresponding relation for the operators, which would make the interpolation step more difficult. In addition, (\ref{eq:main2}) is new. The proofs for regular Sturm--Liouville operators \cite[Theorem~2.3]{relosc2} and regular Dirac operators \cite[Theorem~3.3]{dirac} can be shortened in the same manner and both theorems can be extended to (half-)open and closed spectral intervals analogously to (\ref{eq:main2}) (for the first case cf.~also \cite{ode}).

An extension of Sturm's classical comparison theorem for nodes of solutions to nodes of Wronskians has first been established for Sturm--Liouville operators by Kr\"uger and Teschl in \cite{relosc,relosc2}. In Section~\ref{sec:comp} we show that an analogous comparison theorem holds for Wronskians of solutions of Jacobi difference equations in the case $a_0=a_1$, therefore confer also \cite{thesis}. Moreover, we give Sturm-type comparison theorems for arbitrary perturbations of Jacobi matrices, where, unlike the case of Sturm--Liouville operators, we do not obtain a direct dependence on the coefficients of the operators because $a_0\le a_1$ doesn't imply $J_0\le J_1$.

An extension of Theorem~\ref{thm:main} to Jacobi operators on the half line and on the line is in preparation. This will fill the gap that classical oscillation theory is only applicable below the essential spectrum, while relative oscillation theory works perfectly inside gaps of the essential spectrum. We hope that this will stimulate further research, e.g.~to find new relative oscillation criteria as in the Sturm--Liouville--case, \cite{reloscEPA,PerQuasi}.

We'd be remiss not to mention that several other extensions of relative oscillation theory are thinkable, e.g.~to CMV matrices. Only recently, \v{S}imon Hilscher pointed out in \cite{hilscher} that an extension to the case of Jacobi difference equations with a nonlinear dependence on the spectral parameter would be of particular interest.

\section{The Wronskian} \label{sec:wronski}

\begin{definition} We define the (modified) \emph{Wronskian} (also referred to as Wronski determinant or Casorati determinant) by
\be \begin{array}{rll}
W: 	& \mathbb{D}^2 \times \ell(\Z)^2 	&\to \ell(\Z) \\
		&(\tau_0,\tau_1,\varphi,\psi) 					&\mapsto W^{\tau_0,\tau_1}(\varphi,\psi),
\end{array} \ee
where $\mathbb{D}$ denotes the space of difference equations, such that
\be \begin{array}{rl}
W^{\tau_0,\tau_1}_n(\varphi,\psi)	&= \varphi(n) a_1(n) \psi(n+1) - \psi(n) a_0(n) \varphi(n+1) \\
															&= \begin{vmatrix}\varphi(n) & \psi(n) \\ a_0(n) \varphi(n+1) & a_1(n) \psi(n+1) \end{vmatrix}.
\end{array} \ee
\end{definition}

We abbreviate $\Delta b:=b_0-b_1$, $\Delta a:=a_0-a_1$, and $W_n(\varphi,\psi)=W^{\tau_0,\tau_1}_n(\varphi,\psi)$ whenever the corresponding difference equations are evident from the context. Clearly, if $a_0=a_1$ holds, then $W$ equals the Wronskian from \cite{reloscjacm}. We have
\be\begin{array}{l}
W^{\tau_0,\tau_0}(\varphi,\varphi) \equiv 0, \\
W^{\tau_0,\tau_1}(\varphi,\psi) = - W^{\tau_1,\tau_0}(\psi,\varphi), \\
W^{\tau_0,\tau_1}(c~\varphi,\psi) = W^{\tau_0,\tau_1}(\varphi,c~\psi) = c\ W^{\tau_0,\tau_1}(\varphi,\psi), \\
W^{\tau_0,\tau_1}(\varphi+\ti \varphi,\psi) = W^{\tau_0,\tau_1}(\varphi,\psi) + W^{\tau_0,\tau_1}(\ti \varphi,\psi), \\
W^{\tau_0,\tau_1}(\varphi,\psi+\ti \psi) = W^{\tau_0,\tau_1}(\varphi,\psi) + W^{\tau_0,\tau_1}(\varphi,\ti \psi) \\
\end{array}\ee
for all $c\in \R$ and $\varphi, \ti\varphi, \psi, \ti\psi\in\ell(\Z)$.

\begin{lemma} Green's Formula. We find
\begin{align}
&\sum_{j=n}^m (\varphi (\tau_1 \psi) - \psi (\tau_0 \varphi))(j) = W_m(\varphi,\psi) - W_{n-1}(\varphi,\psi) \\
&\quad - \sum_{j=n-1}^{m-1} \Delta a(j) (\varphi(j+1) \psi(j) + \varphi(j) \psi(j+1)) - \sum_{j=n}^m \Delta b(j) \varphi(j) \psi(j). \nonumber
\end{align}
\end{lemma}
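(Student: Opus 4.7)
The plan is to prove the identity pointwise first and then sum telescopically. Concretely, I would show that for every $j\in\Z$
\begin{equation*}
(\varphi(\tau_1\psi) - \psi(\tau_0\varphi))(j) = W_j(\varphi,\psi) - W_{j-1}(\varphi,\psi) - \Delta a(j-1)\bigl(\varphi(j-1)\psi(j) + \varphi(j)\psi(j-1)\bigr) - \Delta b(j)\varphi(j)\psi(j),
\end{equation*}
from which the lemma follows after summing $j=n,\dots,m$ (the $W$-terms telescope to $W_m - W_{n-1}$, and the $\Delta a$-sum is re-indexed by $k=j-1$ to run from $n-1$ to $m-1$).

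To verify the pointwise identity I would simply expand both sides in terms of $a_0,a_1,b_0,b_1$. Writing out
\begin{equation*}
W_j - W_{j-1} = \varphi(j)a_1(j)\psi(j+1) - \psi(j)a_0(j)\varphi(j+1) - \varphi(j-1)a_1(j-1)\psi(j) + \psi(j-1)a_0(j-1)\varphi(j)
\end{equation*}
and using the definitions
\begin{align*}
\varphi(\tau_1\psi)(j) &= \varphi(j)\bigl(a_1(j)\psi(j+1)+a_1(j-1)\psi(j-1)+b_1(j)\psi(j)\bigr),\\
\psi(\tau_0\varphi)(j) &= \psi(j)\bigl(a_0(j)\varphi(j+1)+a_0(j-1)\varphi(j-1)+b_0(j)\varphi(j)\bigr),
\end{align*}
the $a_1(j)\psi(j+1)$-terms and $a_0(j)\varphi(j+1)$-terms cancel, and the remaining neighbor terms at index $j-1$ combine into $\Delta a(j-1)(\varphi(j-1)\psi(j)+\varphi(j)\psi(j-1))$ while the on-site terms give $\Delta b(j)\varphi(j)\psi(j)$. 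Equating the two yields the pointwise identity.

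There is no genuine obstacle here; the only thing to be careful about is bookkeeping the index shift on the $\Delta a$ sum (the nearest-neighbor coupling through $a$ lives on the bonds $(j-1,j)$, so summing from $j=n$ to $m$ naturally produces a sum over bonds indexed from $n-1$ to $m-1$, matching the statement). As a side observation, when $\tau_0=\tau_1$ one has $\Delta a=\Delta b=0$ and the formula collapses to the usual Green's identity $\sum_j(\varphi\tau\psi-\psi\tau\varphi)(j) = W_m - W_{n-1}$, which is a good sanity check.
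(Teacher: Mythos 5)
Your proposal is correct, and it is exactly the ``short calculation'' the paper leaves to the reader: a pointwise identity obtained by expanding $W_j-W_{j-1}$ and the operator terms, followed by telescoping and the index shift $k=j-1$ on the $\Delta a$ sum. The signs work out with the paper's conventions $\Delta a=a_0-a_1$, $\Delta b=b_0-b_1$, and your sanity check for $\tau_0=\tau_1$ is consistent with the constancy of the Wronskian noted after the corollary.
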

\begin{proof} Just a short calculation.
\end{proof}

\begin{corollary} Let $(\tau_j-z)u_j=0$, then
\begin{align}
&W_m(u_0,u_1) - W_{n-1}(u_0,u_1) \\
&\quad = \sum_{j=n-1}^{m-1} \Delta a(j) (u_0(j+1) u_1(j) + u_0(j) u_1(j+1)) + \sum_{j=n}^m \Delta b(j) u_0(j) u_1(j), \nonumber \\
&W_n(u_0,u_1) - W_{n-1}(u_0,u_1) \label{eq:greenW1} \\
&\quad = \Delta a(n-1) (u_0(n) u_1(n-1) + u_0(n-1) u_1(n)) + \Delta b(n) u_0(n) u_1(n). \nonumber
\end{align}
\end{corollary}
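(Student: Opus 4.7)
The plan is to deduce the corollary directly from the immediately preceding Green's formula by choosing $\varphi = u_0$ and $\psi = u_1$. The decisive observation is that both solutions are taken at the same spectral parameter $z$: since $\tau_0 u_0 = z u_0$ and $\tau_1 u_1 = z u_1$, the integrand on the left-hand side of Green's formula satisfies, pointwise in $j$,
$$
(u_0 \,(\tau_1 u_1) - u_1 \,(\tau_0 u_0))(j) = z\, u_0(j) u_1(j) - z\, u_1(j) u_0(j) = 0.
$$
Hence the entire sum $\sum_{j=n}^m (u_0 (\tau_1 u_1) - u_1 (\tau_0 u_0))(j)$ vanishes identically. Substituting this into Green's formula and moving the boundary term $W_m(u_0,u_1) - W_{n-1}(u_0,u_1)$ to the opposite side of the equation from the two perturbation sums yields the first identity of the corollary.

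For the second identity I would simply specialize the first one to $m = n$. Then the sum $\sum_{j=n-1}^{m-1}$ collapses to the single index $j = n-1$, producing the term $\Delta a(n-1)(u_0(n) u_1(n-1) + u_0(n-1) u_1(n))$, while $\sum_{j=n}^{m}$ reduces to $\Delta b(n) u_0(n) u_1(n)$. No genuine obstacle appears; the only content of the statement is the cancellation in the Green-formula integrand, which is available precisely because $u_0$ and $u_1$ are evaluated at a common $z$. (If one instead had $(\tau_j - \lambda_j) u_j = 0$ with $\lambda_0 \ne \lambda_1$, an extra sum $\sum (\lambda_1 - \lambda_0) u_0 u_1$ would survive, but that case is not needed here.)
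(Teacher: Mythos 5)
Your proof is correct and is exactly the argument the paper intends: the corollary follows from the preceding Green's formula by taking $\varphi=u_0$, $\psi=u_1$ at the common spectral parameter $z$ so that the left-hand sum vanishes, and the second identity is the specialization $m=n$. Nothing further is needed.
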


If $u$ and $\ti u$ solve $\tau u = z u$, then $W(u,\ti u)$ is constant (and vanishes iff $u$ and $\ti u$ are linearly dependent).

\section{Pr\"ufer Transformation}

From now on let $u$ be a solution of (\ref{eq:jde}) and let $u_{-/+}$ moreover fulfill the left/right Dirichlet boundary condition of $J$.

\begin{lemma} \cite{jacop}. \label{lem:Jev} The Jacobi matrix $J$ has $N-1$ real and simple eigenvalues. Moreover,
\be z \in \sigma(J) \iff u_-(z,N)=0 \iff u_+(z,0) = 0. \ee
\end{lemma}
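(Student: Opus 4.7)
The plan is to set up a bijection between eigenvectors of $J$ and solutions of $\tau u=zu$ satisfying both Dirichlet conditions $u(0)=u(N)=0$. All three assertions then follow from this correspondence together with the self-adjointness of $J$.

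First, I would extend an eigenvector $v$ of $Jv=zv$ from $\{1,\dots,N-1\}$ to $\{0,\dots,N\}$ by setting $v(0):=v(N):=0$. A direct inspection of the first and last rows, $b(1)v(1)+a(1)v(2)=zv(1)$ and $a(N-2)v(N-2)+b(N-1)v(N-1)=zv(N-1)$, shows that under this extension they coincide with $(\tau-z)v=0$ at $n=1$ and $n=N-1$, while the interior rows are manifestly the three-term recursion. Thus eigenvectors of $J$ correspond bijectively to solutions of $\tau u=zu$ satisfying $u(0)=u(N)=0$.

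Since $a(n)<0$, any solution of $\tau u=zu$ is uniquely determined by the pair $(u(0),u(1))$; in particular the space of solutions with $u(0)=0$ is one-dimensional, so imposing the further condition $u(N)=0$ carves out an eigenspace of dimension at most one. Combined with the fact that the real symmetric matrix $J$ has $N-1$ eigenvalues counted with multiplicity, this yields $N-1$ real and simple eigenvalues. The spectral equivalences are then immediate: $z\in\sigma(J)$ iff the unique (up to scalar) solution $u_-(z)$ with $u_-(z,0)=0$ also satisfies $u_-(z,N)=0$; and any nontrivial solution vanishing at both endpoints must be proportional to $u_-(z)$ as well as to $u_+(z)$, so $u_-(z,N)=0$ is equivalent to $u_+(z,0)=0$.

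The only step that requires genuine care is the boundary bookkeeping in the extension---checking that no ``ghost'' coefficients $a(0)$ or $a(N-1)$ appear when the extreme rows of $J$ are matched against $\tau$ with $u(0)=u(N)=0$ plugged in. Everything else is standard linear algebra together with the uniqueness of initial-value problems for the three-term recursion.
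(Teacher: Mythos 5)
Your proposal is correct and follows essentially the same route as the paper: identify eigenvectors of $J$ with solutions of $\tau u=zu$ satisfying $u(0)=u(N)=0$, and deduce simplicity from the one-dimensionality of the space of solutions vanishing at $0$ (the paper phrases this last step via $W_0(u_-(z),u)=0$ and constancy of the Wronskian, which is equivalent to your initial-value-problem uniqueness argument). Your explicit check of the boundary rows is a welcome elaboration of the paper's ``it can easily be seen''.
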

\begin{proof} Since $J$ is Hermitian all eigenvalues are real: let $z\in\sig(J), Jv=zv$ and $\norm{v}=1$. Then $z = \langle v,zv \rangle = \langle v,Jv \rangle = \langle Jv,v \rangle = \overline{z}$. It can easily be seen that every eigenvector $u$ corresponding to $z$ fullfills $\tau u=zu$ and $u(0)=0$. Hence, by $W_0(u_-(z),u)=0$, $u_-(z)$ and $u$ are linearly dependent.
\end{proof}

\begin{theorem} \label{thm:numbev} \cite{oscjac}, \cite[Theorem~4.7]{jacop}. For all $\lam\in\R$
\be E_{(-\infty,\lam)} (J) = \#_{(0,N)}(u_-(\lam)) = \#_{(0,N)}(u_+(\lam)) \ee
holds.
\end{theorem}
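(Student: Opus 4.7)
The natural approach is to introduce a discrete Prüfer transformation that rewrites a solution $u(z)$ in polar form, recast the node count as a count of angle crossings of multiples of $\pi$, and then use monotonicity of the Prüfer angle in $z$ together with Lemma~\ref{lem:Jev} to identify these crossings with eigenvalues of $J$.

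Concretely, I would define Prüfer variables $(\rho(z,n),\theta(z,n))$ by
\begin{equation*}
u(z,n) = \rho(z,n)\sin\theta(z,n), \qquad -a(n-1)\, u(z,n-1) = \rho(z,n)\cos\theta(z,n),
\end{equation*}
with $\rho > 0$; this is well-defined since a nontrivial solution has no two consecutive zeros. The three-term recursion $(\tau-z)u = 0$ induces a recursion for $\theta$, and because $a(n) < 0$ one checks directly from the definition of a node that $n$ is a node of $u(z)$ precisely when $\theta$ crosses a multiple of $\pi$ between $n$ and $n+1$ (with the zero case giving the appropriate boundary convention). Thus $\#_{(0,N)}(u_-(z))$ equals the number of multiples of $\pi$ lying strictly between $\theta(z,0)=0$ and $\theta(z,N)$.

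The key analytic step is strict monotonicity of $\theta(z,n)$ in $z$ for each fixed $n\ge 1$. Applying the corollary to Green's formula to $\tau_0=\tau-z_0$ and $\tau_1=\tau-z_1$ (so that $\Delta a \equiv 0$ and $\Delta b \equiv z_1-z_0$) gives
\begin{equation*}
W_n(u(z_0),u(z_1)) - W_{n-1}(u(z_0),u(z_1)) = (z_1-z_0)\, u(z_0,n)\, u(z_1,n).
\end{equation*}
Starting from $W_0(u_-(z_0),u_-(z_1)) = 0$ (forced by $u_-(\cdot,0)=0$) and rewriting in Prüfer form, this yields $\theta(z_1,n) > \theta(z_0,n)$ for all $z_0 < z_1$ and all $n \ge 1$.

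With these tools Lemma~\ref{lem:Jev} identifies the eigenvalues below $\lam$ with the points $z < \lam$ at which $\theta(z,N) \in \pi\Z_{>0}$. Combined with strict monotonicity and the baseline $\theta(z,N) \in (0,\pi)$ for $z$ sufficiently negative (verified by direct estimation of the recursion far below the spectrum), this gives $E_{(-\infty,\lam)}(J) = \#\{k \in \Z_{>0} : k\pi < \theta(\lam,N)\}$, which by the node--angle dictionary equals $\#_{(0,N)}(u_-(\lam))$. The identity with $\#_{(0,N)}(u_+(\lam))$ follows by running the same Prüfer recursion from the right endpoint. I expect the main obstacle to be the careful bookkeeping of the discrete Prüfer angle: establishing strict $z$-monotonicity at every intermediate $n$ (not only at $n=N$), treating the case $\theta(z,n) \in \pi\Z$ at an interior site consistently with the node definition, and aligning the conventions so that eigenvalues $\lam$ themselves (where $\theta(\lam,N) \in \pi\Z$) are correctly excluded from $E_{(-\infty,\lam)}$.
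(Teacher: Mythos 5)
First, a point of reference: the paper does not prove Theorem~\ref{thm:numbev} at all --- it is quoted from \cite{oscjac} and \cite[Theorem~4.7]{jacop} and then used as a black box in Lemma~\ref{lem:ev}. Your sketch is therefore not competing with an in-paper argument; it reconstructs, correctly in outline, the standard proof from those references: Pr\"ufer variables, the dictionary between nodes and crossings of multiples of $\pi$ (Corollary~\ref{cor:unode} and Theorem~\ref{thm:countnodesu} here), strict monotonicity of the Pr\"ufer angle in $z$, and Lemma~\ref{lem:Jev}.

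That said, one step does not work as written. The telescoped Wronskian identity $\rho_0(n)\rho_1(n)\sin\bigl(\theta(z_1,n)-\theta(z_0,n)\bigr)=W_n(u_-(z_0),u_-(z_1))=(z_1-z_0)\sum_{j=1}^{n}u_-(z_0,j)u_-(z_1,j)$ does not by itself ``yield $\theta(z_1,n)>\theta(z_0,n)$ for all $z_0<z_1$'': it only controls the sine of the angle difference, and the sum on the right is not sign-definite for widely separated $z_0<z_1$ (for $n=N$ it vanishes whenever $z_0$ and $z_1$ are both eigenvalues, by orthogonality of eigenvectors). What the identity does give, after dividing by $z_1-z_0$ and letting $z_1\to z_0$, is the infinitesimal version $\partial_z\theta_-(z,n)=\rho(z,n)^{-2}\sum_{j=1}^{n}u_-(z,j)^2>0$ for $n\ge 1$; combined with the (true but not automatic) fact that the branch of $\theta_-(z,n)$ fixed by the normalization (\ref{eq:normalth}) varies continuously in $z$, this is what delivers strict monotonicity, and it is how \cite{oscjac} and \cite[Chapter~4]{jacop} proceed. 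A minor inconsistency: the Pr\"ufer convention you wrote pairs $u(n)$ with $u(n-1)$, yet the initial condition $\theta(z,0)=0$ you then invoke belongs to the forward convention $-a(n)u(z,n+1)=\rho(z,n)\cos\theta(z,n)$ of (\ref{eq:pv}); with your backward pairing, $\theta_-(z,0)$ would involve $u_-(z,-1)$ and would not equal $0$. Once the conventions are aligned and the monotonicity step is replaced by its differential version, the remaining bookkeeping (the baseline $\theta_-(z,N)\in(0,\pi]$ far below the spectrum and the exclusion of $\lam$ itself when $\lam\in\sig(J)$) goes through as you describe.
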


\begin{lemma} \label{lem:simplezeros} If $u(n)=0$, then $u(n-1) u(n+1) <0$.
\end{lemma}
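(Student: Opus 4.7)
The plan is to read off the conclusion directly from the recursion defining the difference equation, using the sign assumption $a<0$ and the fact that a nontrivial solution has no two consecutive zeros.

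First I would write out the equation $(\tau-z)u=0$ at the index $n$, namely
\[
a(n)u(n+1)+a(n-1)u(n-1)+(b(n)-z)u(n)=0.
\]
Substituting the hypothesis $u(n)=0$ collapses this to $a(n)u(n+1)+a(n-1)u(n-1)=0$, so
\[
u(n-1)u(n+1)=-\frac{a(n-1)}{a(n)}\,u(n-1)^{2}.
\]

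Next I would invoke the standing sign convention $a<0$: both $a(n-1)$ and $a(n)$ are negative, hence the ratio $a(n-1)/a(n)$ is strictly positive, so the right-hand side above is $\le 0$ and vanishes exactly when $u(n-1)=0$. To rule out the equality case I would appeal to the basic uniqueness remark from the introduction: a solution of (\ref{eq:jde}) cannot have two consecutive zeros, since otherwise the initial pair $(u(n-1),u(n))=(0,0)$ would force $u\equiv 0$. Therefore $u(n-1)\neq 0$, giving $u(n-1)^{2}>0$ and finally $u(n-1)u(n+1)<0$.

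There is no real obstacle here: the only thing to be careful about is remembering that the no-two-consecutive-zeros property is needed to promote the inequality from $\le 0$ to $<0$, and that this in turn depends on the hypothesis $u\not\equiv 0$ built into the definition of a solution. The sign information $a<0$ is what ensures the minus sign survives; with an indefinite $a$ one would only get $u(n-1)u(n+1)\neq 0$ without a definite sign.
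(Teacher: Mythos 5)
Your proof is correct and follows essentially the same route as the paper's: evaluate the three-term recursion at $n$, use $u(n)=0$ to eliminate the middle term, and combine the sign condition $a<0$ with the no-two-consecutive-zeros property to conclude $u(n-1)u(n+1)<0$. The only cosmetic difference is that the paper solves for $u(n+1)$ and reads off its sign relative to $u(n-1)$, while you form the product $u(n-1)u(n+1)=-\tfrac{a(n-1)}{a(n)}u(n-1)^2$ directly; the content is identical.
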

\begin{proof} Since (\ref{eq:jde}) is a three-term-recursion and $u\not\equiv 0$, all zeros of $u$ are simple and
\be u(n+1) = \underbrace{- a(n)^{-1}}_{>0} (\underbrace{a(n-1)}_{<0} u(n-1) + \underbrace{(b(n) -z) u(n)}_{=0}) \ne 0 \ee
holds.
\end{proof}

By $(u(n),u(n+1)) \ne (0,0)$ for all $n\in\Z$ we can introduce \emph{Pr\"ufer variables}: let $\rho_u, \theta_u \in \ell(\Z)$ denote sequences so that
\be\begin{array}{rcl}\label{eq:pv}
u(n) &=& \rho_u(n) \sin\theta_u(n), \\
- a(n) u(n+1) &=& \rho_u(n) \cos\theta_u(n),
\end{array}\ee
and $\rho_u(n)>0$ holds for all $n\in\Z$. Choose $\theta_u(n_0)\in(-\pi,\pi]$ at the initial position $n_0$ and assume
\be \label{eq:normalth} \ceil{\theta_u(n)/ \pi} \le \ceil{\theta_u(n+1)/ \pi} \le \ceil{\theta_u(n)/ \pi} + 1 \ee
for all $n\in\Z$, then both sequences are well-defined and unique. Here, $x \mapsto \lceil x \rceil = \min \{n \in \Z \,|\, n \geq x \}$ denotes the ceiling function, a left-continuous analog to the well-known floor function $x \mapsto \floor{x} = \max \{n \in \Z \,|\, n \leq x \}$ which itself is a right-continuous step function.

We follow \cite{relosc2} and use the slightly refined (compared to \cite{reloscjacm,jacop,oscjac}) definition of Pr\"ufer variables by taking the secondary diagonals $a$ into account. By $-a>0$ this will not influence the herein recalled claims on the nodes of solutions, but it simplifies our calculations as soon as we look at the nodes of the Wronskian.

\begin{lemma} \label{lem:nodeiff} Fix some $n\in\Z$, then $\exists~k\in\Z$ s.t.~$\theta_u(n) = k \pi + \gamma$ and $\theta_u(n+1) = k \pi + \Gamma$, where
\be\begin{array}{rll}\label{eq:nodeiff}
\gamma \in (0, \frac{\pi}{2}], &\Gamma \in (0,\pi]    &\iff\quad n \text{ is not a node of }u, \\
\gamma \in (\frac{\pi}{2}, \pi], &\Gamma \in (\pi,2\pi) &\iff\quad n \text{ is a node of }u \\
\end{array}\ee
holds. Moreover,
\be \label{eq:nodeiff2} \theta_u(n)=k\pi+\frac{\pi}{2} \quad\iff\quad \theta_u(n+1)=(k+1)\pi. \ee
\end{lemma}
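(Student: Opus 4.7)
The plan is to work in one $\pi$-interval at a time: use the normalisation (\ref{eq:normalth}) to restrict $\theta_u(n+1)$ to a single $2\pi$-window above $\theta_u(n)$, and then extract sign information from the two Pr\"ufer identities and the three-term recurrence to pin down the exact subinterval. I would begin by writing $\theta_u(n) = k\pi + \gamma$ with the unique $\gamma \in (0, \pi]$ (so $\ceil{\theta_u(n)/\pi} = k+1$). The assumption (\ref{eq:normalth}) then gives $\ceil{\theta_u(n+1)/\pi} \in \{k+1, k+2\}$, i.e.~$\theta_u(n+1) = k\pi + \Gamma$ for a unique $\Gamma \in (0, 2\pi]$. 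Combining $-a(n)u(n+1) = \rho_u(n)\cos\theta_u(n)$ with $u(n+1) = \rho_u(n+1)\sin\theta_u(n+1)$ yields $\rho_u(n+1)\sin\theta_u(n+1) = \rho_u(n)\cos\theta_u(n)/(-a(n))$; since $-a(n), \rho_u(n), \rho_u(n+1) > 0$, $\sin\theta_u(n+1)$ and $\cos\theta_u(n)$ share sign and vanish together, which in the new variables reads: $\sin\Gamma$ and $\cos\gamma$ have the same sign.

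Next I would run a case analysis on $\gamma$. For $\gamma \in (0, \pi/2)$, $\cos\gamma > 0$ forces $\sin\Gamma > 0$ and hence $\Gamma \in (0, \pi)$ within the window $(0, 2\pi]$; reading off signs gives $u(n)$ and $u(n+1)$ both of sign $(-1)^k$, so $a(n)u(n)u(n+1) < 0$ and $n$ is not a node. For $\gamma \in (\pi/2, \pi)$, $\cos\gamma < 0$ forces $\sin\Gamma < 0$ and $\Gamma \in (\pi, 2\pi)$; $u(n)$ and $u(n+1)$ then have opposite signs, $a(n)u(n)u(n+1) > 0$, and $n$ is a node. For $\gamma = \pi$, one has $u(n) = 0$ (so $n$ is a node), and $\cos\gamma = -1$ again produces $\sin\Gamma < 0$ and $\Gamma \in (\pi, 2\pi)$.

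The main obstacle is the borderline case $\gamma = \pi/2$: there $\cos\gamma = 0$ only delivers $\sin\Gamma = 0$, allowing $\Gamma \in \{\pi, 2\pi\}$, and the normalisation (\ref{eq:normalth}) alone cannot discriminate. To eliminate $\Gamma = 2\pi$, I would invoke Lemma~\ref{lem:simplezeros} at the index $n+1$: since $\cos\theta_u(n) = 0$ forces $u(n+1) = 0$, that lemma gives $u(n)u(n+2) < 0$. The Pr\"ufer formula $\cos\theta_u(n+1) = -a(n+1)u(n+2)/\rho_u(n+1)$ then has sign opposite to $u(n) = \rho_u(n)(-1)^k$, i.e.~sign $(-1)^{k+1}$. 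Only $\Gamma = \pi$ is consistent with $\cos(k\pi + \Gamma) = (-1)^{k+1}$, so $\Gamma = \pi$ and, since $u(n) \ne 0$ and $u(n+1) = 0$, $n$ is not a node. This completes (\ref{eq:nodeiff}). The equivalence (\ref{eq:nodeiff2}) is then immediate: the forward implication is the case just analysed, and conversely $\Gamma = \pi$ forces $\sin\theta_u(n+1) = 0$, hence $u(n+1) = 0$, hence $\cos\theta_u(n) = 0$, i.e.~$\gamma = \pi/2$.
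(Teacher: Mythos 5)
Your proof is correct and follows essentially the same route as the paper's: normalise $\gamma\in(0,\pi]$ and $\Gamma\in(0,2\pi]$ via (\ref{eq:normalth}), exploit that $\sin\Gamma$ and $\cos\gamma$ share sign (and vanish together) from the Pr\"ufer identities, and resolve the degenerate case $u(n+1)=0$ by applying Lemma~\ref{lem:simplezeros} to determine the sign of $\cos\theta_u(n+1)$. The only difference is organisational (you split on the value of $\gamma$ rather than on which of $u(n)$, $u(n+1)$ vanishes), and your exhaustive case analysis does yield both directions of the stated equivalences.
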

\begin{proof} Abbreviate $\theta=\theta_u$. Choose $k\in\Z$ such that $\theta(n)=k\pi+\gamma$, $\gamma \in (0,\pi]$ holds. By (\ref{eq:normalth}) we have $\Gamma \in (0,2\pi]$.
If $u(n)u(n+1)\ne 0$, then $\sin\gamma\cos\gamma>0$ iff $n$ is not a node of $u$ and $\sin\gamma\cos\gamma<0$ iff $n$ is a node of $u$, hence (\ref{eq:nodeiff}) clearly holds for $\gamma$.
By $\sin\Gamma\cos\gamma>0$ we have $\sin\Gamma>0$ iff $n$ is not a node of $u$ and $\sin\Gamma<0$ iff $n$ is a node of $u$, thus, (\ref{eq:nodeiff}) also holds for $\Gamma$.

Now, suppose we have $u(n+1)=0$, then $n$ is not a node of $u$ and either $\Gamma=\pi$ or $\Gamma=2\pi$ holds. By Lemma~\ref{lem:simplezeros} we have $u(n)u(n+2)<0$, hence $\sin\theta(n) \cos\theta(n+1)=(-1)^k \sin\gamma (-1)^k \cos \Gamma<0$. Thus, by $\cos \Gamma<0$, we have $\Gamma=\pi$. From $-a(n) u(n+1) = \rho(n) \cos \theta(n)=0$ we conclude that $(-1)^k \cos\gamma =0$, thus $\gamma=\frac{\pi}{2}$ and hence (\ref{eq:nodeiff}) and (\ref{eq:nodeiff2}) hold. If $u(n)=0$, then $n$ is a node of $u$, $\gamma=\pi$, and (\ref{eq:nodeiff}) holds by $\sin\theta(n+1)\cos\theta(n)>0$, i.e.~$(-1)^k\sin\Gamma (-1)^k\cos\gamma>0$.
\end{proof}

\begin{corollary} \label{cor:unode} For all $n\in\Z$ we have
\be \ceil{\theta_u(n+1)/\pi} = \begin{cases}
\ceil{\theta_u(n)/\pi} + 1 & \text{if $n$ is a node of $u$} \\
\ceil{\theta_u(n)/\pi} & \text{otherwise.}
\end{cases}\ee
\end{corollary}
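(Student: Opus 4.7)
The plan is to read off the corollary directly from Lemma~\ref{lem:nodeiff}, which already pinpoints the exact quadrant of $\theta_u(n)$ and the exact interval of length $\pi$ containing $\theta_u(n+1)$ according to whether $n$ is a node of $u$.

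First, I would fix $n\in\Z$ and apply Lemma~\ref{lem:nodeiff} to produce $k\in\Z$ with $\theta_u(n)=k\pi+\gamma$ and $\theta_u(n+1)=k\pi+\Gamma$, where $\gamma\in(0,\pi]$ in both cases of the lemma. Since $\gamma/\pi\in(0,1]$, this immediately yields $\theta_u(n)/\pi\in(k,k+1]$ and therefore $\ceil{\theta_u(n)/\pi}=k+1$, independently of whether $n$ is a node.

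Next I would split into the two cases provided by the lemma. If $n$ is not a node of $u$, then $\Gamma\in(0,\pi]$, whence $\theta_u(n+1)/\pi\in(k,k+1]$ and $\ceil{\theta_u(n+1)/\pi}=k+1=\ceil{\theta_u(n)/\pi}$. If $n$ is a node, then $\Gamma\in(\pi,2\pi)$, so $\theta_u(n+1)/\pi\in(k+1,k+2)$ and $\ceil{\theta_u(n+1)/\pi}=k+2=\ceil{\theta_u(n)/\pi}+1$.

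There is no real obstacle here; the corollary is essentially a bookkeeping reformulation of Lemma~\ref{lem:nodeiff} in terms of the left-continuous ceiling counter $\ceil{\theta_u(\cdot)/\pi}$. The only subtlety worth being explicit about is that the use of the ceiling (rather than the floor) is precisely what makes the boundary values $\gamma=\pi$ and $\Gamma=\pi$ fall into the ``no jump'' case, which is consistent with the normalization (\ref{eq:normalth}) and with the identification of nodes given in (\ref{eq:nodeiff})--(\ref{eq:nodeiff2}).
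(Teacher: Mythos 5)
Your proof is correct and follows exactly the route the paper intends: the corollary is stated without proof precisely because it is the immediate bookkeeping consequence of Lemma~\ref{lem:nodeiff} that you spell out ($\gamma\in(0,\pi]$ always gives $\ceil{\theta_u(n)/\pi}=k+1$, while $\Gamma\in(0,\pi]$ versus $\Gamma\in(\pi,2\pi)$ decides whether the ceiling stays or jumps by one). Nothing is missing.
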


Now we are able to count nodes of solutions of the Jacobi difference equation using Pr\"ufer variables and the number of nodes in an interval $(m,n)$ is given by

\begin{theorem} \label{thm:countnodesu} \cite[Lemma~2.5]{oscjac}. We have
\be \#_{(m,n)}(u) = \ceil{\theta_u(n)/\pi} - \floor{\theta_u(m)/\pi} - 1. \ee
\end{theorem}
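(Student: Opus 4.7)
The plan is to telescope Corollary~\ref{cor:unode}. Summing
\[
\ceil{\theta_u(j+1)/\pi} - \ceil{\theta_u(j)/\pi} = \begin{cases} 1 & \text{if } j \text{ is a node of } u, \\ 0 & \text{otherwise,} \end{cases}
\]
over $j = m, m+1, \dots, n-1$ immediately yields
\[
\ceil{\theta_u(n)/\pi} - \ceil{\theta_u(m)/\pi} = \#\{j : m \le j \le n-1,\ j \text{ is a node of } u\}.
\]
Everything then reduces to reconciling this raw count with the convention defining $\#_{(m,n)}(u)$.

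That convention differs from the raw count only at the left endpoint $m$: a node at $j = m$ is counted in $\#_{(m,n)}(u)$ iff $u(m) \ne 0$, while the raw count includes it whenever $m$ happens to be a node. I would split into two cases according to whether $\theta_u(m)/\pi$ is an integer, equivalently whether $u(m) = 0$. If $u(m) = 0$ then $\theta_u(m) \in \pi\Z$, so $\ceil{\theta_u(m)/\pi} = \floor{\theta_u(m)/\pi}$, and the node at $m$ must be subtracted off the telescoped count, giving $\ceil{\theta_u(n)/\pi} - \floor{\theta_u(m)/\pi} - 1$. If instead $u(m) \ne 0$ then $\theta_u(m)/\pi \notin \Z$, so $\ceil{\theta_u(m)/\pi} = \floor{\theta_u(m)/\pi} + 1$; here the telescoped count already equals $\#_{(m,n)}(u)$, since a possible node at $m$ is treated identically in both, and rewriting yields the same expression. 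The right endpoint needs no attention: the sum stops at $j = n-1$, so a node at $n$ is automatically excluded, matching the open right endpoint in $(m,n)$.

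There is no serious obstacle: the theorem is essentially a bookkeeping consequence of Corollary~\ref{cor:unode} together with the elementary fact that $\ceil{x} - \floor{x}$ equals $0$ or $1$ according as $x$ is or is not an integer. The only step that requires a moment's care is the left-endpoint case distinction, which is exactly what forces the asymmetric combination $\ceil{\cdot} - \floor{\cdot} - 1$ in the statement.
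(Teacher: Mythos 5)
Your proof is correct and follows essentially the same route as the paper's: the paper phrases the telescoping of Corollary~\ref{cor:unode} as an induction on $n$, with the base case $n=m+1$ carrying out exactly your left-endpoint dichotomy between $u(m)=0$ (where $\theta_u(m)/\pi\in\Z$ and the node at $m$ is discarded) and $u(m)\ne 0$ (where $\ceil{\theta_u(m)/\pi}=\floor{\theta_u(m)/\pi}+1$). The bookkeeping at both endpoints is handled correctly in your argument, so nothing is missing.
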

\begin{proof}
We use mathematical induction: let $n=m+1$, then if $u(m)=0$, $u(n)\neq0$ we have $\#_{(m,n)}(u)=0$ and by Corollary~\ref{cor:unode}
\be \ceil{\theta_u(n)/\pi}=\ceil{\theta_u(m+1)/\pi} = \ceil{\underbrace{\theta_u(m)/\pi}_{\in\Z}}+1=\floor{\theta_u(m)/\pi}+1 \ee
holds. If $u(m)\ne 0$ holds, then by Corollary~\ref{cor:unode} we have
\be \floor{\underbrace{\theta_u(m)/\pi}_{\notin\Z}} = \ceil{\theta_u(m)/\pi} - 1 = \begin{cases}
\ceil{\theta_u(n)/\pi} - 2 & \text{if $m$ is a node} \\
\ceil{\theta_u(n)/\pi} - 1 & \text{otherwise.}
\end{cases}\ee
The inductive step follows again from Corollary~\ref{cor:unode}.
\end{proof}

Let $s_{-/+}(z)$ denote the solution of $\tau s = z s$ fulfilling
\be s_-(z,0)=0, s_-(z,1)=1, \quad\text{resp.~}s_+(z,N)=0, s_+(z,N+1)=1 \ee
and let $n_0$ denote the base point, i.e.~$n_0=0$, resp.~$n_0=N$. Then, by $s_\pm(n_0)=0$ we have $\sin\theta_\pm(n_0)=0$ and by $s_\pm(n_0+1)=1$ we have $-a(n_0)s_\pm(n_0+1)=\rho_s(n_0)\cos\theta_\pm(n_0)>0$, hence $\theta_\pm(n_0)=0$ holds by $\theta_\pm(n_0)\in(-\pi,\pi]$.

\begin{corollary} \label{cor:countnodess} We have
\be \#_{(0,N)}(s_-) = \ceil{\theta_{s_-}(N)/\pi}-1 \quad\text{and}\quad \#_{(0,N)}(s_+) = -\floor{\theta_{s_+}(0)/\pi}-1. \ee
\end{corollary}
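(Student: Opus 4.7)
The plan is to apply Theorem~\ref{thm:countnodesu} directly, using the normalization of the Pr\"ufer angle at the base point that was computed in the paragraph immediately preceding the corollary.

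Recall that the paragraph before the corollary establishes $\theta_{s_-}(0)=0$ for $s_-$ (base point $n_0=0$) and $\theta_{s_+}(N)=0$ for $s_+$ (base point $n_0=N$), both lying in $(-\pi,\pi]$ and arising because $\sin\theta_\pm(n_0)=0$ together with $-a(n_0)s_\pm(n_0+1)=\rho_s(n_0)\cos\theta_\pm(n_0)>0$.

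For the first equality I would substitute $m=0$, $n=N$, $u=s_-$ into Theorem~\ref{thm:countnodesu}. Since $\theta_{s_-}(0)=0$, we have $\floor{\theta_{s_-}(0)/\pi}=\floor{0}=0$, and the formula collapses to
\be \#_{(0,N)}(s_-) = \ceil{\theta_{s_-}(N)/\pi} - 0 - 1 = \ceil{\theta_{s_-}(N)/\pi}-1. \ee
For the second equality I would substitute $m=0$, $n=N$, $u=s_+$ into the same theorem. Since $\theta_{s_+}(N)=0$, we have $\ceil{\theta_{s_+}(N)/\pi}=\ceil{0}=0$, so
\be \#_{(0,N)}(s_+) = 0 - \floor{\theta_{s_+}(0)/\pi} - 1 = -\floor{\theta_{s_+}(0)/\pi}-1. \ee

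There is essentially no obstacle here: the corollary is just Theorem~\ref{thm:countnodesu} evaluated at the two canonical base points, and the only thing to check is the compatibility of the initial Pr\"ufer angle choice with the floor/ceiling (which is immediate once one notes $0\in\Z$ so $\floor{0/\pi}=\ceil{0/\pi}=0$). Thus the proof is a one-line application of the preceding theorem in each case.
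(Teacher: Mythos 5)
Your proof is correct and is exactly the argument the paper intends: the corollary is a direct specialization of Theorem~\ref{thm:countnodesu} to $m=0$, $n=N$, using the normalizations $\theta_{s_-}(0)=0$ and $\theta_{s_+}(N)=0$ established in the preceding paragraph, so that $\floor{\theta_{s_-}(0)/\pi}=0$ and $\ceil{\theta_{s_+}(N)/\pi}=0$. Nothing further is needed.
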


\begin{lemma} \label{lem:ev} We find
\be\begin{array}{l}
E_{(-\infty,\lam_1)} (J_1) - E_{(-\infty,\lam_0)} (J_0) \\
\quad= \ceil{\Delta_{s_{0,+}(\lam_0),s_{1,-}(\lam_1)}(N)/\pi} - \ceil{\Delta_{s_{0,+}(\lam_0),s_{1,-}(\lam_1)}(0)/\pi} \\
\quad= \floor{\Delta_{s_{0,-}(\lam_0),s_{1,+}(\lam_1)}(N)/ \pi}-\floor{\Delta_{s_{0,-}(\lam_0),s_{1,+}(\lam_1)}(0)/\pi}, \\
E_{(-\infty,\lam_1)} (J_1) - E_{(-\infty,\lam_0]}(J_0) \\
\quad= \ceil{\Delta_{s_{0,\pm}(\lam_0),s_{1,\mp}(\lam_1)}(N)/\pi} - \floor{\Delta_{s_{0,\pm}(\lam_0),s_{1,\mp}(\lam_1)}(0)/\pi}-1, \\
E_{(-\infty,\lam_1]} (J_1) - E_{(-\infty,\lam_0)} (J_0) \\
\quad= \floor{\Delta_{s_{0,\pm}(\lam_0),s_{1,\mp}(\lam_1)}(N)/\pi} - \ceil{\Delta_{s_{0,\pm}(\lam_0),s_{1,\mp}(\lam_1)}(0)/\pi}+1,\text{ and} \\
E_{(-\infty,\lam_1]} (J_1) - E_{(-\infty,\lam_0]} (J_0) \\
\quad= \ceil{\Delta_{s_{0,-}(\lam_0),s_{1,+}(\lam_1)}(N)/\pi} - \ceil{\Delta_{s_{0,-}(\lam_0),s_{1,+}(\lam_1)}(0)/\pi} \\
\quad= \floor{\Delta_{s_{0,+}(\lam_0),s_{1,-}(\lam_1)}(N)/\pi} - \floor{\Delta_{s_{0,+}(\lam_0),s_{1,-}(\lam_1)}(0)/\pi},
\end{array}\ee
where $\Delta_{u,v}=\theta_v-\theta_u\in\ell(\Z)$.
\end{lemma}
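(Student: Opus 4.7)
The idea is to read off each $E_{(-\infty,\lam)}(J_j)$ as a single ceiling or floor of a Prüfer angle via Theorem~\ref{thm:numbev} and Corollary~\ref{cor:countnodess}, and then subtract to obtain the four displayed identities.

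First I would combine Theorem~\ref{thm:numbev} with Corollary~\ref{cor:countnodess} to obtain, for any $\lam$ and any $J$,
$$E_{(-\infty,\lam)}(J) = \ceil{\theta_{s_-(\lam)}(N)/\pi}-1 = -\floor{\theta_{s_+(\lam)}(0)/\pi}-1.$$
By the definition of $s_\pm$ and the initial normalisation $\theta_\pm(n_0)=0$ of the Prüfer angle at the respective base point, one also has $\theta_{s_-(\lam)}(0)=0$ and $\theta_{s_+(\lam)}(N)=0$. Consequently,
\begin{align*}
\Delta_{s_{0,+}(\lam_0),s_{1,-}(\lam_1)}(N) &= \theta_{s_{1,-}(\lam_1)}(N), &
\Delta_{s_{0,+}(\lam_0),s_{1,-}(\lam_1)}(0) &= -\theta_{s_{0,+}(\lam_0)}(0),\\
\Delta_{s_{0,-}(\lam_0),s_{1,+}(\lam_1)}(N) &= -\theta_{s_{0,-}(\lam_0)}(N), &
\Delta_{s_{0,-}(\lam_0),s_{1,+}(\lam_1)}(0) &= \theta_{s_{1,+}(\lam_1)}(0).
\end{align*}
Subtracting the two representations of $E_{(-\infty,\lam)}$ for $J_1$ and $J_0$ and using $-\floor{x}=\ceil{-x}$, $-\ceil{x}=\floor{-x}$ then directly yields the open/open identity in both of its forms.

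For the closed endpoints I would invoke Lemma~\ref{lem:Jev}: $\lam\in\sig(J)$ is equivalent to $s_-(\lam,N)=0$, resp.\ $s_+(\lam,0)=0$, which via (\ref{eq:pv}) and $\rho>0$ is equivalent to the corresponding $\theta/\pi$ being an integer. Hence in the pair $(s_{0,+},s_{1,-})$ we have $\lam_0\in\sig(J_0)\iff\Delta(0)/\pi\in\Z$ and $\lam_1\in\sig(J_1)\iff\Delta(N)/\pi\in\Z$, with the roles of $0$ and $N$ interchanged for $(s_{0,-},s_{1,+})$. Combining this with the elementary identity
$$\ceil{x}=\floor{x}+1-[x\in\Z],\qquad x\in\R,$$
each additional eigenvalue at an endpoint of the spectral interval exactly swaps the corresponding $\ceil$ for a $\floor$ (and contributes the constant $\pm 1$ seen in the middle two formulas). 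A short case check on $\lam_j\in\sig(J_j)$ vs.\ $\lam_j\notin\sig(J_j)$ then produces the three remaining identities.

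The main (and purely notational) obstacle is bookkeeping which of $\Delta(0)$, $\Delta(N)$ is tied to $\lam_0$ rather than $\lam_1$ in each of the two pairings $(s_{0,+},s_{1,-})$ and $(s_{0,-},s_{1,+})$: the Dirichlet base points $0$ and $N$ swap between the two pairings, which is precisely why ceilings and floors interchange between the first and second formula on each line of the lemma. Once one variant in a given line is verified, the other follows by the same calculation with $(0,N)$ and $(+,-)$ interchanged.
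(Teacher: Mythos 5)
Your proposal is correct and follows essentially the same route as the paper: both reduce $E_{(-\infty,\lam)}(J_j)$ to ceilings/floors of Pr\"ufer angles via Theorem~\ref{thm:numbev} and Corollary~\ref{cor:countnodess}, use the normalisation $\theta_{s_\pm}(n_0)=0$ to rewrite these as values of $\Delta$, and handle the closed endpoints through Lemma~\ref{lem:Jev} together with the integrality criterion $W=0\iff\Delta/\pi\in\Z$ and a ceiling/floor swap. No gaps.
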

\begin{proof} Abbreviate $s_{j,\pm}=s_{j,\pm}(\lam_j)$. By Theorem~\ref{thm:numbev}, Corollary~\ref{cor:countnodess}, and $-\ceil{x} = \floor{-x}$ for all $x\in\R$ we have
$$\begin{array}{l}
E_{(-\infty,\lam_1)} (J_1) - E_{(-\infty,\lam_0)} (J_0) = \#_{(0,N)}(s_{1,-})-\#_{(0,N)}(s_{0,+}) \\
\quad = \ceil{\theta_{s_{1,-}}(N)/ \pi} - \ceil{-\theta_{s_{0,+}}(0)/ \pi} = \ceil{\Delta_{s_{0,+},s_{1,-}}(N)/\pi} - \ceil{\Delta_{s_{0,+},s_{1,-}}(0)/\pi} \\
\quad = -(E_{(-\infty,\lam_0)} (J_0) - E_{(-\infty,\lam_1)} (J_1)) = \floor{\Delta_{s_{0,-},s_{1,+}}(N)/\pi} - \floor{\Delta_{s_{0,-},s_{1,+}}(0)/\pi}.
\end{array}$$
By Lemma~\ref{lem:Jev} and (\ref{eq:pv}) we have
$$\begin{array}{l}
\lam_0\in\sig(J_0) \quad\iff\quad \Delta_{s_{0,-},s_{1,+}}(N)/\pi\in\Z \quad\iff\quad \Delta_{s_{0,+},s_{1,-}}(0)/\pi\in\Z, \\
\lam_1\in\sig(J_1) \quad\iff\quad \Delta_{s_{0,+},s_{1,-}}(N)/\pi\in\Z \quad\iff\quad \Delta_{s_{0,-},s_{1,+}}(0)/\pi\in\Z
\end{array}$$
and hence
\be E_{(-\infty,\lam_1)} (J_1) - E_{(-\infty,\lam_0]} (J_0) = \ceil{\Delta_{s_{0,\pm},s_{1,\mp}}(N)/ \pi}-\floor{\Delta_{s_{0,\pm},s_{1,\mp}}(0)/ \pi} - 1 \ee
holds by
\be\begin{array}{l}
E_{(-\infty,\lam_1)}(J_1) - E_{(-\infty,\lam_0)}(J_0) \\
\quad= \ceil{\Delta_{s_{0,\pm},s_{1,\mp}}(N)/ \pi}-\floor{\Delta_{s_{0,\pm},s_{1,\mp}}(0)/ \pi} - \begin{cases}
1 &\text{ if $\lam_0\notin\sig(J_0)$} \\
0 &\text{ if $\lam_0\in\sig(J_0)$.}
\end{cases}\end{array}\ee
The rest now follows analogously.
\end{proof}

\section{Nodes of the Wronskian} \label{sec:wronskinodes}

It remains to investigate the sign-changes of $W(u_0,u_1)$. We will express them in terms of Pr\"ufer angles of the involved solutions to finally gain their connection to the difference of the spectra of the corresponding matrices by Lemma~\ref{lem:ev}.

Therefore let $u_j$ be solutions of $\tau_j-z, j=0,1$, where $\rho_j,\theta_j\in\ell(\Z)$ are their Pr\"ufer variables from (\ref{eq:pv}). They correspond to the same spectral parameter $z$, which is no restriction, since we can always replace $b_1$ by $b_1-(z_1-z_0)$. We abbreviate
\be
\Delta:=\Delta_{u_0,u_1} = \theta_1 - \theta_0\in\ell(\Z)
\ee
and adopt Lemma~\ref{lem:nodeiff2} and Lemma~\ref{lem:normaldelta} from \cite{reloscjacm}:
\begin{lemma} \label{lem:nodeiff2} \cite{reloscjacm}. Fix some $n\in\Z$, then $\exists~k_j\in\Z, j=0,1,$ s.t.
\be
\begin{array}{l@{\quad}l}
\theta_j(n) = k_j \pi + \gamma_j, &\quad\gamma_j \in (0,\pi], \\
\theta_j(n + 1) = k_j \pi + \Gamma_j, &\quad\Gamma_j \in (0,2 \pi),
\end{array}\ee
where
\begin{description}
\item [(1)] either $u_0$ and $u_1$ have a node at $n$ or both do not have a node at $n$, then
\be \gamma_1 - \gamma_0 \in (-\frac{\pi}{2}, \frac{\pi}{2}) \quad\text{ and }\quad \Gamma_1 - \Gamma_0 \in (-\pi,\pi). \ee
\item [(2)] $u_1$ has no node at $n$, but $u_0$ has a node at $n$, then
\be \gamma_1 - \gamma_0 \in (-\pi,0) \quad\text{ and }\quad \Gamma_1 - \Gamma_0 \in (-2\pi,0). \ee
\item [(3)] $u_1$ has a node at $n$, but $u_0$ has no node at $n$, then
\be \gamma_1 - \gamma_0 \in (0,\pi) \quad\text{ and }\quad \Gamma_1 - \Gamma_0 \in (0, 2\pi). \ee
\end{description}
\end{lemma}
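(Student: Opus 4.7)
\emph{Proof proposal.}
The plan is to reduce the statement to a double application of Lemma~\ref{lem:nodeiff}, one for each of $u_0$ and $u_1$, and then compare the two resulting windows by elementary interval arithmetic. No new analytic input beyond Lemma~\ref{lem:nodeiff} should be needed.

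First I would note that imposing $\gamma_j\in(0,\pi]$ uniquely determines $k_j\in\Z$ from $\theta_j(n)$, and that the companion bound $\Gamma_j\in(0,2\pi)$ is then automatic from Lemma~\ref{lem:nodeiff}. That same lemma sharpens these windows according to whether $n$ is a node of $u_j$: if $n$ is not a node of $u_j$ then $\gamma_j\in(0,\pi/2]$ and $\Gamma_j\in(0,\pi]$, whereas if $n$ is a node of $u_j$ then $\gamma_j\in(\pi/2,\pi]$ and $\Gamma_j\in(\pi,2\pi)$.

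With these refined windows in hand, each of the three cases becomes a subtraction. Case (1) splits into two subcases: if neither $u_0$ nor $u_1$ has a node at $n$ then both $\gamma_j$ lie in $(0,\pi/2]$, and if both have a node then both $\gamma_j$ lie in $(\pi/2,\pi]$; in either subcase these are intervals of length $\pi/2$ containing both $\gamma_0$ and $\gamma_1$, so $\gamma_1-\gamma_0\in(-\pi/2,\pi/2)$, and analogously $\Gamma_1-\Gamma_0\in(-\pi,\pi)$ from intervals of length $\pi$. In case (2) one subtracts $\gamma_1\in(0,\pi/2]$ from $\gamma_0\in(\pi/2,\pi]$ to obtain $\gamma_1-\gamma_0\in(-\pi,0)$, and likewise $\Gamma_1\in(0,\pi]$ minus $\Gamma_0\in(\pi,2\pi)$ gives $\Gamma_1-\Gamma_0\in(-2\pi,0)$. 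Case (3) is completely symmetric.

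The only mild subtlety worth double-checking is the strictness of the endpoints in case (1), where the two subcases produce the same final open interval through slightly different boundary configurations: in the ``no node'' subcase $\gamma_j>0$ is strict while $\gamma_j\le\pi/2$ is not, and in the ``both node'' subcase these roles swap. Writing out the four extremal configurations shows that the endpoints $\pm\pi/2$ and $\pm\pi$ are never attained, so the intervals are genuinely open as stated. Beyond this bookkeeping I don't anticipate any obstacle.
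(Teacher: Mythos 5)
Your proposal is correct and takes essentially the same route as the paper, whose entire proof is the single line ``Use Lemma~\ref{lem:nodeiff}''; your case-by-case interval arithmetic simply makes explicit the bookkeeping the paper leaves to the reader, including the endpoint check in case (1).
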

\begin{proof} Use Lemma~\ref{lem:nodeiff}.
\end{proof}

\begin{lemma} \label{lem:normaldelta} \cite{reloscjacm}. We have
\be \ceil{\Delta(n)/ \pi} -1 \leq \ceil{\Delta(n + 1)/ \pi} \leq \ceil{\Delta(n)/ \pi} + 1. \ee
\end{lemma}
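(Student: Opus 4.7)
The plan is to reduce the claim, via the preceding Lemma~\ref{lem:nodeiff2}, to a short case analysis on the possible values of $(\gamma_1-\gamma_0)/\pi$ and $(\Gamma_1-\Gamma_0)/\pi$. First, I would use Lemma~\ref{lem:nodeiff2} to pick $k_0,k_1\in\Z$ and angles $\gamma_j,\Gamma_j$ so that $\theta_j(n)=k_j\pi+\gamma_j$ and $\theta_j(n+1)=k_j\pi+\Gamma_j$ for $j=0,1$. Setting $K:=k_1-k_0\in\Z$, this yields
\[ \Delta(n)=K\pi+(\gamma_1-\gamma_0),\qquad \Delta(n+1)=K\pi+(\Gamma_1-\Gamma_0). \]
Because $\ceil{x+K}=\ceil{x}+K$ for any $K\in\Z$, the claimed double inequality collapses to showing
\[ \bigl|\ceil{(\Gamma_1-\Gamma_0)/\pi}-\ceil{(\gamma_1-\gamma_0)/\pi}\bigr|\le 1. \]

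Next, I would verify this bound in each of the three mutually exclusive cases supplied by Lemma~\ref{lem:nodeiff2}. In case~(1), the inclusions $(\gamma_1-\gamma_0)/\pi\in(-1/2,1/2)$ and $(\Gamma_1-\Gamma_0)/\pi\in(-1,1)$ force both ceilings to lie in $\{0,1\}$. In case~(2), $(\gamma_1-\gamma_0)/\pi\in(-1,0)$ forces $\ceil{(\gamma_1-\gamma_0)/\pi}=0$, while $(\Gamma_1-\Gamma_0)/\pi\in(-2,0)$ gives $\ceil{(\Gamma_1-\Gamma_0)/\pi}\in\{-1,0\}$. In case~(3), symmetrically, $\ceil{(\gamma_1-\gamma_0)/\pi}=1$ and $\ceil{(\Gamma_1-\Gamma_0)/\pi}\in\{1,2\}$. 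In every case the two ceilings differ by at most one, which is exactly the required bound.

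I do not expect any genuine obstacle here: essentially all of the combinatorial content has already been done in Lemma~\ref{lem:nodeiff2}. The only delicate point is the bookkeeping of the open/closed endpoints, to make sure that the strict inclusions supplied by that lemma are preserved upon passing to ceilings; once one writes down the three ranges above, the verification is mechanical.
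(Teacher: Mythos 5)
Your proposal is correct and follows essentially the same route as the paper: the paper's proof also invokes Lemma~\ref{lem:nodeiff2} to place $\Delta(n)$ and $\Delta(n+1)$ in the three pairs of intervals (shifted by $k\pi$ with $k=k_1-k_0$) and observes that the bound holds in each case. Your explicit reduction via $\ceil{x+K}=\ceil{x}+K$ and the enumeration of the possible ceiling values is just a slightly more detailed write-up of the same case check.
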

\begin{proof} Let $k:=k_1-k_0$, $n \in \Z$. By Lemma~\ref{lem:nodeiff2} we have either
\be\begin{array}{l@{\quad}l@{\quad}l}
\Delta(n) \in (k \pi - \frac{\pi}{2}, k \pi + \frac{\pi}{2}) & \text{and} & \Delta(n+1) \in (k \pi - \pi, k \pi + \pi), \\
\Delta(n) \in (k \pi - \pi, k \pi) & \text{and} & \Delta(n+1) \in (k \pi - 2\pi, k \pi),\text{ or} \\
\Delta(n) \in (k \pi, k \pi + \pi) & \text{and} & \Delta(n+1) \in (k \pi, k \pi + 2 \pi). \\
\end{array}\ee
In each case the lemma holds.
\end{proof}

\begin{lemma} We have
\begin{align}
\label{eq:Wpv0} W_n(u_0,u_1)&=\rho_0(n) \rho_1(n) \sin \Delta(n), \\
\label{eq:Wpv1} W_n(u_0,u_1) u_0(n+1) u_1(n+1)&=p \sin(\gamma_1-\gamma_0) \cos \gamma_0 \cos \gamma_1, \\
\label{eq:Wpv2} W_{n+1}(u_0,u_1) u_0(n+1) u_1(n+1)&=\ti p \sin(\Gamma_1-\Gamma_0) \cos\gamma_0 \cos\gamma_1,
\end{align}
where $p,\ti p>0$.
\end{lemma}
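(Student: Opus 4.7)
The plan is to prove all three identities by direct substitution of the Pr\"ufer representations into the definition of the Wronskian, using Lemma~\ref{lem:nodeiff2}'s decomposition $\theta_j(n)=k_j\pi+\gamma_j$, $\theta_j(n+1)=k_j\pi+\Gamma_j$ to pull out the sign ambiguities and reduce everything to expressions in $\gamma_j$, $\Gamma_j$.

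For (\ref{eq:Wpv0}) I would substitute $u_j(n)=\rho_j(n)\sin\theta_j(n)$ and $a_j(n)u_j(n+1)=-\rho_j(n)\cos\theta_j(n)$ into the definition $W_n(u_0,u_1)=u_0(n)a_1(n)u_1(n+1)-u_1(n)a_0(n)u_0(n+1)$. The products involving $a_j(n)$ cancel cleanly, leaving
\[
\rho_0(n)\rho_1(n)\bigl(\sin\theta_1(n)\cos\theta_0(n)-\sin\theta_0(n)\cos\theta_1(n)\bigr),
\]
which is $\rho_0(n)\rho_1(n)\sin\Delta(n)$ by the standard angle-subtraction formula.

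For (\ref{eq:Wpv1}) and (\ref{eq:Wpv2}), I would rewrite $\theta_j(n)=k_j\pi+\gamma_j$ and use $\sin\theta_j(n)=(-1)^{k_j}\sin\gamma_j$, $\cos\theta_j(n)=(-1)^{k_j}\cos\gamma_j$, so that
\[
u_0(n+1)u_1(n+1)=\frac{\rho_0(n)\rho_1(n)(-1)^{k_0+k_1}\cos\gamma_0\cos\gamma_1}{a_0(n)a_1(n)}.
\]
Applied to (\ref{eq:Wpv0}), this gives $W_n(u_0,u_1)=\rho_0(n)\rho_1(n)(-1)^{k_1-k_0}\sin(\gamma_1-\gamma_0)$, so multiplying by $u_0(n+1)u_1(n+1)$ the sign factors combine to $(-1)^{2k_1}=1$, and (\ref{eq:Wpv1}) follows with $p=\rho_0(n)^2\rho_1(n)^2/(a_0(n)a_1(n))$. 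Since $a_0(n),a_1(n)<0$ we have $a_0(n)a_1(n)>0$, so $p>0$. For (\ref{eq:Wpv2}) the same computation applied to $W_{n+1}$ with $\theta_j(n+1)=k_j\pi+\Gamma_j$ gives $W_{n+1}(u_0,u_1)=\rho_0(n+1)\rho_1(n+1)(-1)^{k_1-k_0}\sin(\Gamma_1-\Gamma_0)$; multiplying by the same $u_0(n+1)u_1(n+1)$ expressed via the cosines at $n$, the $(-1)^{k_1-k_0}\cdot(-1)^{k_0+k_1}=1$ cancellation yields the claim with $\tilde p=\rho_0(n)\rho_0(n+1)\rho_1(n)\rho_1(n+1)/(a_0(n)a_1(n))>0$.

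There is no real obstacle here beyond careful sign bookkeeping; the only subtle point is ensuring that the common factor $u_0(n+1)u_1(n+1)$ is expanded via the cosine form (evaluated at index $n$) rather than the sine form (evaluated at $n+1$), so that the signs $(-1)^{k_0+k_1}$ combine with the $(-1)^{k_1-k_0}$ from $\sin\Delta$ to produce an unambiguously positive scalar. The sign of $p,\tilde p$ is then an immediate consequence of the assumption $a_0,a_1<0$.
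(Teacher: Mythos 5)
Your proposal is correct and follows essentially the same route as the paper: substitute the Pr\"ufer representation into the definition of $W_n$, use the angle-subtraction formula, and take $p=\rho_0(n)^2\rho_1(n)^2/(a_0(n)a_1(n))$ and $\ti p=\rho_0(n)\rho_1(n)\rho_0(n+1)\rho_1(n+1)/(a_0(n)a_1(n))$, positive since $a_0,a_1<0$. The paper merely states these choices without writing out the $(-1)^{k_1-k_0}\cdot(-1)^{k_0+k_1}=1$ cancellation that you make explicit.
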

\begin{proof} Consider
\be \begin{array}{ll}
W_n(u_0,u_1)&=u_0(n) a_1(n) u_1(n+1) - u_1(n) a_0(n) u_0(n+1) \\
&=\rho_0(n) \rho_1(n) \sin(\theta_1(n) - \theta_0(n)) \\
&=\rho_0(n) \rho_1(n) (-1)^{k_1-k_0} \sin(\gamma_1(n) - \gamma_0(n))
\end{array} \ee
and set $p=\frac{\rho_0(n)^2 \rho_1(n)^2}{a_0(n) a_1(n)}$ and $\ti p=\frac{\rho_0(n) \rho_1(n) \rho_0(n+1) \rho_1(n+1)}{a_0(n) a_1(n)}$.
\end{proof}

\begin{lemma} We have
\be\begin{array}{ll}
u_0(n+1)=u_1(n+1)=0 &\implies W_n(u_0,u_1)=W_{n+1}(u_0,u_1)=0, \\
u_0(n+1)=0, u_1(n+1)\ne 0 &\implies W_n(u_0,u_1)W_{n+1}(u_0,u_1)>0, \\
u_0(n+1)\ne 0, u_1(n+1)=0 &\implies W_n(u_0,u_1)W_{n+1}(u_0,u_1)>0.
\end{array}\ee
\end{lemma}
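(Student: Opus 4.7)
The plan is to handle the three implications by direct case-by-case substitution into the defining formula
$$W_n(u_0,u_1) = u_0(n) a_1(n) u_1(n+1) - u_1(n) a_0(n) u_0(n+1),$$
combining the assumptions $u_j(n+1)=0$ with Lemma~\ref{lem:simplezeros} (which tells us that if $u_j(n+1)=0$ then $u_j(n)u_j(n+2)<0$) and with the standing sign assumption $a_0,a_1<0$.

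For the first implication, both summands of $W_n(u_0,u_1)$ contain the factor $u_0(n+1)$ or $u_1(n+1)$, so if both vanish then $W_n=0$; shifting the index, both summands of $W_{n+1}(u_0,u_1)$ contain the factor $u_0(n+1)$ or $u_1(n+1)$ as well, hence $W_{n+1}=0$. So this case is immediate from the definition alone.

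For the second implication, setting $u_0(n+1)=0$ kills the second summand of $W_n$ and the first summand of $W_{n+1}$, leaving
$$W_n = u_0(n)\, a_1(n)\, u_1(n+1), \qquad W_{n+1} = -\, u_1(n+1)\, a_0(n+1)\, u_0(n+2),$$
so that
$$W_n(u_0,u_1) W_{n+1}(u_0,u_1) = -\, a_1(n)\, a_0(n+1)\, u_1(n+1)^2 \cdot u_0(n) u_0(n+2).$$
Now $a_1(n)a_0(n+1)>0$ by $a_0,a_1<0$, and $u_0(n)u_0(n+2)<0$ by Lemma~\ref{lem:simplezeros} applied to $u_0$ (whose zero at $n+1$ is isolated). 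Together with $u_1(n+1)^2>0$ this yields the claimed strict positivity. The third implication is entirely symmetric: swap the roles of $u_0$ and $u_1$, so that the surviving factors become $-u_1(n) a_0(n) u_0(n+1)$ and $u_0(n+1)a_1(n+1)u_1(n+2)$, and apply Lemma~\ref{lem:simplezeros} to $u_1$ instead.

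There is no genuine obstacle here — the statement is essentially a bookkeeping consequence of the two-term structure of $W$, the fact that zeros of Jacobi solutions are simple with alternating neighbours, and the hypothesis $a_0,a_1<0$. The only thing to watch is to identify correctly which summand survives after each substitution, and to track the sign introduced by the minus in the definition of $W$.
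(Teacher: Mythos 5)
Your proof is correct and follows essentially the same route as the paper: identify the surviving summand of $W_n$ and $W_{n+1}$ after substituting the zero, and conclude via Lemma~\ref{lem:simplezeros} together with $a_0,a_1<0$ that the product $-a_1(n)a_0(n+1)u_1(n+1)^2 u_0(n)u_0(n+2)$ (resp.\ its symmetric counterpart) is positive. The sign bookkeeping in all three cases matches the paper's displayed identities exactly.
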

\begin{proof} The first claim holds trivially. For the second claim just observe that by Lemma~\ref{lem:simplezeros}
\be W_n(u_0,u_1) W_{n+1}(u_0,u_1)=-u_0(n)u_0(n+2)a_0(n+1)a_1(n)u_1(n+1)^2>0\ee
holds if $u_0(n+1)=0, u_1(n+1)\ne 0$ and
\be W_n(u_0,u_1) W_{n+1}(u_0,u_1)=-u_1(n)u_1(n+2)a_0(n)a_1(n+1)u_0(n+1)^2>0\ee
holds if $u_0(n+1)\ne 0, u_1(n+1)=0$.
\end{proof}
\begin{corollary}\label{cor:WnodeNoZeros} If $W_n(u_0,u_1)W_{n+1}(u_0,u_1)<0$ or $W_n(u_0,u_1)=0, W_{n+1}(u_0,u_1) \neq 0$ or $W_n(u_0,u_1)\neq 0, W_{n+1}(u_0,u_1)=0$, then
\be u_0(n+1)u_1(n+1)\neq 0 \ee
and moreover $\Delta a(n)\neq 0$ or $\Delta b(n+1)\neq 0$ holds.
\end{corollary}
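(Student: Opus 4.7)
The plan is to deduce both parts of the corollary directly from the previous lemma together with the Green-type identity for the Wronskian.

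For the first assertion I would argue by contraposition. The preceding lemma enumerates the three possible ways in which $u_0(n+1)u_1(n+1)$ could fail to be nonzero: either both $u_0(n+1)$ and $u_1(n+1)$ vanish, or exactly one of them does. In the first case the lemma yields $W_n(u_0,u_1)=W_{n+1}(u_0,u_1)=0$; in each of the other two it yields $W_n(u_0,u_1)W_{n+1}(u_0,u_1)>0$, which in particular forces both $W_n$ and $W_{n+1}$ to be nonzero. Hence none of the three hypothesized sign/zero configurations on $W_n,W_{n+1}$ is compatible with $u_0(n+1)u_1(n+1)=0$, proving the first claim.

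For the second assertion I would shift the index by one in the identity (\ref{eq:greenW1}) to obtain
\begin{equation*}
W_{n+1}(u_0,u_1)-W_n(u_0,u_1)=\Delta a(n)\bigl(u_0(n+1)u_1(n)+u_0(n)u_1(n+1)\bigr)+\Delta b(n+1)u_0(n+1)u_1(n+1).
\end{equation*}
Under the hypothesis of the corollary we have in all three cases $W_{n+1}(u_0,u_1)\ne W_n(u_0,u_1)$ (they differ in sign, or one vanishes while the other does not). Thus the left-hand side is nonzero, so the right-hand side cannot vanish. If both $\Delta a(n)=0$ and $\Delta b(n+1)=0$ held, the right-hand side would be identically zero, a contradiction. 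Therefore $\Delta a(n)\ne 0$ or $\Delta b(n+1)\ne 0$.

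No real obstacle is expected here: the first part is a direct rereading of the previous lemma, and the second part is a one-line application of the already-established Green's formula. The only point to be slightly careful about is to use the shifted version of (\ref{eq:greenW1}) (indices $n$ and $n+1$ rather than $n-1$ and $n$), so that the coefficients appearing are $\Delta a(n)$ and $\Delta b(n+1)$, matching the statement.
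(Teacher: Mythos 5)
Your proof is correct and follows exactly the route the paper intends: the paper states this as an unproved corollary of the immediately preceding lemma (which handles the contrapositive of the first claim) together with the shifted identity (\ref{eq:greenW1}) for the second claim. Both of your steps are sound, including the observation that each of the three hypothesized configurations forces $W_n(u_0,u_1)\neq W_{n+1}(u_0,u_1)$.
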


To shorten notation we denote
\be\begin{array}{cl}
\up &\text{if\quad $\ceil{\Delta(n+1)/\pi}=\ceil{\Delta(n)/\pi}+1$}, \\
\nc &\text{if\quad $\ceil{\Delta(n+1)/\pi}=\ceil{\Delta(n)/\pi}$, and } \\
\down &\text{if\quad $\ceil{\Delta(n+1)/\pi}=\ceil{\Delta(n)/\pi}-1$}.
\end{array}\ee

\begin{lemma} \label{lem:Wnodes} Let $n\in\Z$, then
\begin{align}
\up &\iff W_{n+1}(u_0,u_1) u_0(n+1) u_1(n+1)>0\text{ and} \nonumber \\
& \qquad\qquad\text{either}\quad W_n(u_0,u_1)W_{n+1}(u_0,u_1)<0 \label{eq:Wnodes1} \\
& \qquad\qquad \text{or}\quad W_n(u_0,u_1)=0, W_{n+1}(u_0,u_1)\ne 0, \nonumber \\
\down &\iff W_n(u_0,u_1) u_0(n+1) u_1(n+1)>0\text{ and} \nonumber \\
& \qquad\qquad\text{either}\quad W_n(u_0,u_1)W_{n+1}(u_0,u_1)<0 \label{eq:Wnodes2} \\
& \qquad\qquad\text{or}\quad W_n(u_0,u_1)\ne 0, W_{n+1}(u_0,u_1)=0, \nonumber \\
\nc &\iff \text{otherwise.}
\end{align}
\end{lemma}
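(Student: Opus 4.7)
The plan is to translate both sides of (\ref{eq:Wnodes1})--(\ref{eq:Wnodes2}) into trigonometric conditions on $\alpha := \gamma_1 - \gamma_0$ and $\beta := \Gamma_1 - \Gamma_0$, and then run through the three cases of Lemma~\ref{lem:nodeiff2}. Writing $k := k_1 - k_0$, we have $\Delta(n) = k\pi + \alpha$ and $\Delta(n+1) = k\pi + \beta$, so
\[
\ceil{\Delta(n+1)/\pi} - \ceil{\Delta(n)/\pi} = \ceil{\beta/\pi} - \ceil{\alpha/\pi},
\]
and from the proof of the previous lemma together with (\ref{eq:Wpv1})--(\ref{eq:Wpv2}) the quantities $W_n$, $W_{n+1}$, $W_n u_0(n+1) u_1(n+1)$ and $W_{n+1} u_0(n+1) u_1(n+1)$ carry the signs of $(-1)^k\sin\alpha$, $(-1)^k\sin\beta$, $\sin\alpha\cos\gamma_0\cos\gamma_1$ and $\sin\beta\cos\gamma_0\cos\gamma_1$, respectively. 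By Corollary~\ref{cor:WnodeNoZeros} I may assume $\cos\gamma_0\cos\gamma_1 \ne 0$ whenever any clause of the right-hand sides holds, and Lemma~\ref{lem:nodeiff} then fixes this sign as strictly positive in case~(1) of Lemma~\ref{lem:nodeiff2} and strictly negative in cases~(2) and~(3).

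In case~(1) we have $\alpha \in (-\pi/2, \pi/2)$ and $\beta \in (-\pi, \pi)$, so $\ceil{\alpha/\pi}, \ceil{\beta/\pi} \in \{0,1\}$, and a short check gives $\up$ iff $\alpha \le 0 < \beta$. On the right-hand side of (\ref{eq:Wnodes1}), the condition $\sin\beta\,\cos\gamma_0\cos\gamma_1 > 0$ forces $\beta > 0$, and the disjunction then reduces to $\sin\alpha \le 0$, i.e.~$\alpha \le 0$; so the two sides coincide. The equivalence for $\down$ is analogous, with the roles of $W_n$ and $W_{n+1}$ (and of $\alpha$ and $\beta$) interchanged.

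In cases~(2) and~(3) only one of $\up$, $\down$ can occur: in case~(2) we have $\ceil{\alpha/\pi}=0$ and $\ceil{\beta/\pi}\in\{-1,0\}$, so $\down$ iff $\beta \le -\pi$ and $\nc$ otherwise, while in case~(3) we have $\ceil{\alpha/\pi}=1$ and $\ceil{\beta/\pi}\in\{1,2\}$, so $\up$ iff $\beta > \pi$ and $\nc$ otherwise. Since $\cos\gamma_0\cos\gamma_1 < 0$ in both cases and $\sin\alpha$ has definite sign ($<0$ in~(2), $>0$ in~(3)), direct inspection identifies these $\beta$-ranges with the corresponding right-hand sides of (\ref{eq:Wnodes1})--(\ref{eq:Wnodes2}), while the ruled-out jump together with its right-hand side are both vacuous; $\nc$ then follows by elimination. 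I expect the main bookkeeping nuisance to lie in the boundary values $\beta \in \{-\pi, 0, \pi\}$ at which $W_{n+1}$ vanishes: these are precisely the configurations captured by the ``$W_n = 0, W_{n+1} \ne 0$'' (dually ``$W_n \ne 0, W_{n+1} = 0$'') alternatives built into (\ref{eq:Wnodes1})--(\ref{eq:Wnodes2}), so once these are checked separately the equivalences are complete.
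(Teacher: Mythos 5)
Your proposal is correct and follows essentially the same route as the paper: both translate the Wronskian sign conditions into the trigonometric quantities of (\ref{eq:Wpv0})--(\ref{eq:Wpv2}), invoke Corollary~\ref{cor:WnodeNoZeros} to secure $\cos\gamma_0\cos\gamma_1\ne 0$, and run through the three cases of Lemma~\ref{lem:nodeiff2}. The only difference is organizational --- you prove both implications at once within each case by computing $\ceil{\beta/\pi}-\ceil{\alpha/\pi}$ directly, whereas the paper separates the two directions --- and your boundary checks ($\alpha=0$, $\beta\in\{-\pi,0,\pi\}$) match the paper's treatment of the $W_n=0$ and $W_{n+1}=0$ alternatives.
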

\begin{proof} If $\up$, then we either have case (1) of Lemma~\ref{lem:nodeiff2} and $\gamma_1-\gamma_0\in(-\frac{\pi}{2},0], \Gamma_1-\Gamma_0\in(0,\pi)$ or we have case (3) of Lemma~\ref{lem:nodeiff2} and $\gamma_1-\gamma_0\in(0,\pi), \Gamma_1-\Gamma_0\in(\pi,2\pi)$. Clearly, by (\ref{eq:Wpv0}), in either case we have
\be \text{$W_n(u_0,u_1)W_{n+1}(u_0,u_1)<0$ \quad or \quad $W_n(u_0,u_1)=0, W_{n+1}(u_0,u_1)\ne 0$.} \ee
Hence, by Corollary~\ref{cor:WnodeNoZeros} we have $u_0(n+1)u_1(n+1)\ne 0$ and thus $\cos\gamma_0\cos\gamma_1\ne 0$. In case (1) of Lemma~\ref{lem:nodeiff2} we have $\sin(\Gamma_1-\Gamma_0)>0$ and $\cos\gamma_0\cos\gamma_1>0$ by Lemma~\ref{lem:nodeiff}. Hence, by (\ref{eq:Wpv2}) $W_{n+1}(u_0,u_1) u_0(n+1) u_1(n+1)>0$ holds. In case (3) of Lemma~\ref{lem:nodeiff2} we have $\sin(\Gamma_1-\Gamma_0)<0$ and $\cos\gamma_0\cos\gamma_1<0$ by Lemma~\ref{lem:nodeiff}. Hence, by (\ref{eq:Wpv2})
\be W_{n+1}(u_0,u_1) u_0(n+1) u_1(n+1)>0 \ee holds.

If $\down$, then we either have case (1) of Lemma~\ref{lem:nodeiff2} and $\gamma_1-\gamma_0\in(0,\frac{\pi}{2}), \Gamma_1-\Gamma_0\in(-\pi,0]$ or we have case (2) of Lemma~\ref{lem:nodeiff2} and $\gamma_1-\gamma_0\in(-\pi,0), \Gamma_1-\Gamma_0\in(-2\pi,-\pi]$. Clearly, by (\ref{eq:Wpv0}), in either case we have
\be \text{$W_n(u_0,u_1)W_{n+1}(u_0,u_1)<0$ \quad or \quad $W_n(u_0,u_1)\ne 0, W_{n+1}(u_0,u_1)=0$.} \ee
Hence, by Corollary~\ref{cor:WnodeNoZeros} we have $u_0(n+1)u_1(n+1)\ne 0$ and thus $\cos\gamma_0\cos\gamma_1\ne 0$. In case (1) of Lemma~\ref{lem:nodeiff2} we have $\sin(\gamma_1-\gamma_0)>0$ and $\cos\gamma_0\cos\gamma_1>0$ by Lemma~\ref{lem:nodeiff}. Hence, by (\ref{eq:Wpv1}) $W_n(u_0,u_1) u_0(n+1) u_1(n+1)>0$ holds. In case (2) of Lemma~\ref{lem:nodeiff2} we have $\sin(\gamma_1-\gamma_0)<0$ and $\cos\gamma_0\cos\gamma_1<0$ by Lemma~\ref{lem:nodeiff}. Hence, by (\ref{eq:Wpv1})
\be W_n(u_0,u_1) u_0(n+1) u_1(n+1)>0 \ee
holds.

On the other hand, if $W_n(u_0,u_1)W_{n+1}(u_0,u_1)<0$ by (\ref{eq:Wpv0}) we have either $\up$ or $\down$. If, use (\ref{eq:Wpv1}),
\be W_n(u_0,u_1) u_0(n+1) u_1(n+1)=p \sin(\gamma_1-\gamma_0) \cos \gamma_0 \cos \gamma_1>0, \ee
then we have either case (1) or case (2) of Lemma~\ref{lem:nodeiff2} and in each case we have $\nc$ or $\down$. Hence,
$$ \text{$W_n(u_0,u_1)W_{n+1}(u_0,u_1)<0$ and $W_n(u_0,u_1) u_0(n+1) u_1(n+1)>0\implies\down$.} $$
If, use (\ref{eq:Wpv1}),
\be W_n(u_0,u_1) u_0(n+1) u_1(n+1)=p \sin(\gamma_1-\gamma_0) \cos \gamma_0 \cos \gamma_1<0, \ee
then we have either case (1) or case (3) of Lemma~\ref{lem:nodeiff2} and in each case we have $\nc$ or $\up$. Hence,
$$ \text{$W_n(u_0,u_1)W_{n+1}(u_0,u_1)<0$ and $W_{n+1}(u_0,u_1) u_0(n+1) u_1(n+1)>0\implies\up$.} $$

If $W_n(u_0,u_1)=0, W_{n+1}(u_0,u_1)\ne 0$, then we have case (1) of Lemma~\ref{lem:nodeiff2} and by Corollary~\ref{cor:WnodeNoZeros} we have $\cos\gamma_0\cos\gamma_1>0$. Hence, if $W_{n+1}(u_0,u_1) u_0(n+1) u_1(n+1)>0$, then (\ref{eq:Wpv2}) implies $\sin(\Gamma_1-\Gamma_0)>0$, thus, $\up$ holds by case (1) of Lemma~\ref{lem:nodeiff2}.

If $W_n(u_0,u_1)\ne 0, W_{n+1}(u_0,u_1)=0$, then by Corollary~\ref{cor:WnodeNoZeros} we have $\cos\gamma_0\cos\gamma_1\ne 0$. If additionally $W_n(u_0,u_1) u_0(n+1) u_1(n+1)>0$ holds, then by (\ref{eq:Wpv1}) $\cos\gamma_0\cos\gamma_1$ and $\sin(\gamma_1-\gamma_0)$ are of the same sign. Hence, we have case (1) of Lemma~\ref{lem:nodeiff2} and $\down$ or case (2) of Lemma~\ref{lem:nodeiff2} and $\down$.

Thus, (\ref{eq:Wnodes1}) and (\ref{eq:Wnodes2}) hold and clearly by Lemma~\ref{lem:normaldelta} we have $\nc$ otherwise.
\end{proof}

\begin{remark}
Consider (\ref{eq:countingmethod}), then
\be\begin{array}{l}
\text{$W_n(u_0,u_1) W_{n+1}(u_0,u_1) \neq 0$ or $W_n(u_0,u_1)=W_{n+1}(u_0,u_1)=0$} \\
\quad\implies\#_n(u_0,u_1)=-\#_n(u_1,u_0), \\
W_n(u_0,u_1)W_{n+1}(u_0,u_1)<0\implies\#_n(u_0,u_1)\ne 0
\end{array}\ee
by Corollary~\ref{cor:WnodeNoZeros}.
Moreover, if $W_n(u_0,u_1)=0$ and $W_{n+1}(u_0,u_1)\neq0$ holds, then $u_0(n)=0 \iff u_1(n)=0$. 
\end{remark}

That (\ref{eq:countingmethod}) is a generalization of the counting method established in \cite[(1.8)]{reloscjacm}, where $\Delta a=0$ holds, follows from (\ref{eq:greenW1}). From Lemma~\ref{lem:Wnodes} we conclude
\begin{align}
\#_n(u_0,u_1) = \ceil{\Delta(n+1)/\pi} - \ceil{\Delta(n)/\pi}, \\
\#_{[m,n]}(u_0,u_1) = \ceil{\Delta(n)/\pi} - \ceil{\Delta(m)/\pi}. \label{eq:WnodeDelta}
\end{align}

\begin{lemma} \label{lem:WnodeDelta} We have
\begin{align}
\#_{(m,n]}(u_0, u_1) &= \ceil{\Delta(n)/\pi} - \floor{\Delta(m)/\pi}-1, \\
\#_{[m,n)}(u_0, u_1) &= \floor{\Delta(n)/\pi} - \ceil{\Delta(m)/\pi}+1,\text{ and} \\
\#_{(m,n)}(u_0, u_1) &= \floor{\Delta(n)/\pi} - \floor{\Delta(m)/\pi}.
\end{align}
\end{lemma}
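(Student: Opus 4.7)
The plan is to bootstrap from the identity $\#_{[m,n]}(u_0,u_1) = \ceil{\Delta(n)/\pi} - \ceil{\Delta(m)/\pi}$ already displayed in (\ref{eq:WnodeDelta}) and from the definitions of $\#_{(m,n]}$, $\#_{[m,n)}$, $\#_{(m,n)}$ given in the introduction, which differ from $\#_{[m,n]}$ only by $\pm 1$ depending on whether $W_m(u_0,u_1)$ or $W_n(u_0,u_1)$ vanishes. So essentially all that is needed is a clean way to absorb those $\pm 1$ correction terms into a swap between $\lceil\cdot\rceil$ and $\lfloor\cdot\rfloor$ at the appropriate endpoint.

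The key observation enabling this is (\ref{eq:Wpv0}): since $\rho_0(n),\rho_1(n)>0$, one has
\be
W_n(u_0,u_1)=0 \quad\iff\quad \sin\Delta(n)=0 \quad\iff\quad \Delta(n)/\pi\in\Z,
\ee
so the indicator functions appearing in the definitions of $\#_{(m,n]}$ and $\#_{[m,n)}$ are exactly the indicators of $\Delta(m)/\pi\in\Z$ and $\Delta(n)/\pi\in\Z$. For the half-open version $\#_{(m,n]}$ I would then do a two-line case split at the left endpoint: if $\Delta(m)/\pi\in\Z$ then $\ceil{\Delta(m)/\pi}=\floor{\Delta(m)/\pi}$ and the subtracted $1$ produces $-\floor{\Delta(m)/\pi}-1$; if $\Delta(m)/\pi\notin\Z$ then $\ceil{\Delta(m)/\pi}=\floor{\Delta(m)/\pi}+1$ and no correction is needed, yielding the same expression. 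Either way one reads off $\#_{(m,n]}(u_0,u_1)=\ceil{\Delta(n)/\pi}-\floor{\Delta(m)/\pi}-1$. The formula for $\#_{[m,n)}$ follows by the symmetric case split at the right endpoint, and the formula for $\#_{(m,n)}$ by combining both corrections.

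There is no real obstacle here beyond bookkeeping; the only thing to be slightly careful about is the direction of the ceiling/floor swap, which is dictated entirely by whether the correction is $+1$ or $-1$ and by the elementary identity $\ceil{x}-[x\in\Z]=\floor{x}$ for $x\in\R$ (and its mirror image $\floor{x}+[x\in\Z]\ne\ceil{x}$ in general, so one has to keep track of the $+1$ offset). Once (\ref{eq:Wpv0}) is invoked the three formulas drop out after a few lines of arithmetic.
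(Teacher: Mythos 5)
Your proposal is correct and follows essentially the same route as the paper: both start from $\#_{[m,n]}(u_0,u_1)=\ceil{\Delta(n)/\pi}-\ceil{\Delta(m)/\pi}$ in (\ref{eq:WnodeDelta}), invoke (\ref{eq:Wpv0}) to identify $W_j(u_0,u_1)=0$ with $\Delta(j)/\pi\in\Z$, and absorb the $\pm1$ endpoint corrections by trading a ceiling for a floor via the case split on integrality. No gaps.
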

\begin{proof} By (\ref{eq:Wpv0}) we have $W_j(u_0,u_1)=0\iff\Delta(j)/\pi\in\Z$ and hence by (\ref{eq:WnodeDelta})
\be\begin{array}{rl}
\#_{(m,n]}(u_0, u_1) &= \ceil{\Delta(n)/\pi} - \ceil{\Delta(m)/\pi} -
\begin{cases}
0&\text{if $W_m(u_0,u_1) \neq 0$} \\
1&\text{if $W_m(u_0,u_1) = 0$}
\end{cases} \\
&= \ceil{\Delta(n)/\pi} - \floor{\Delta(m)/\pi} - 1
\end{array}\ee
holds. The second and the third claim follow analogously.
\end{proof}

\begin{lemma} We have
\be \#_{[m,n]}(u_0,u_1) = - \#_{(m,n)}(u_1,u_0), \quad \#_{(m,n]}(u_0,u_1) = - \#_{[m,n)}(u_1,u_0). \ee
If $W_m(u_0,u_1)\ne 0$ and $W_n(u_0,u_1)\ne 0$, then $\#_{[m,n]}(u_0,u_1)=-\#_{[m,n]}(u_1,u_0)$.
\end{lemma}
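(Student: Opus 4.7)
The plan is to read everything off Lemma~\ref{lem:WnodeDelta} after establishing the correct symmetry behaviour under the swap $u_0\leftrightarrow u_1$. I will need two preparatory observations, both immediate from the definitions: (a) the Pr\"ufer angle difference transforms as $\Delta_{u_1,u_0}(n)=\theta_{u_0}(n)-\theta_{u_1}(n)=-\Delta(n)$, and (b) the Wronskian is antisymmetric, $W^{\tau_1,\tau_0}_n(u_1,u_0)=-W^{\tau_0,\tau_1}_n(u_0,u_1)$, so that $W_n(u_1,u_0)=0\iff W_n(u_0,u_1)=0\iff\Delta(n)/\pi\in\Z$. Observation (b) is essential because the correction terms in the definitions of $\#_{(m,n]}$, $\#_{[m,n)}$, and $\#_{(m,n)}$ are all keyed to the vanishing of the Wronskian at the endpoints, and I need them to be applied consistently on both sides of each claimed identity.

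With these two facts and the elementary identity $\ceil{-x}=-\floor{x}$, the first identity follows by direct computation: applying Lemma~\ref{lem:WnodeDelta} to the pair $(u_1,u_0)$ (with angle difference $-\Delta$) gives
\[
\#_{(m,n)}(u_1,u_0)=\floor{-\Delta(n)/\pi}-\floor{-\Delta(m)/\pi}=-\bigl(\ceil{\Delta(n)/\pi}-\ceil{\Delta(m)/\pi}\bigr),
\]
which by (\ref{eq:WnodeDelta}) equals $-\#_{[m,n]}(u_0,u_1)$. The second identity works the same way, matching the $-1$ correction inside $\#_{(m,n]}(u_0,u_1)=\ceil{\Delta(n)/\pi}-\floor{\Delta(m)/\pi}-1$ against the $+1$ correction inside $\#_{[m,n)}(u_1,u_0)=\floor{-\Delta(n)/\pi}-\ceil{-\Delta(m)/\pi}+1$; after applying $\ceil{-x}=-\floor{x}$ the two expressions are visibly negatives of one another.

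For the conditional third claim, the extra hypothesis $W_m(u_0,u_1)\ne 0$ and $W_n(u_0,u_1)\ne 0$ forces $\Delta(m)/\pi,\Delta(n)/\pi\notin\Z$, which is exactly what is needed for $\ceil{-\Delta(j)/\pi}=-\floor{\Delta(j)/\pi}=-\ceil{\Delta(j)/\pi}+1$ at $j=m,n$. The two shifts by $+1$ then cancel in the difference, yielding $\#_{[m,n]}(u_1,u_0)=-\#_{[m,n]}(u_0,u_1)$ directly from the formula (\ref{eq:WnodeDelta}) applied to both pairs.

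The main obstacle is purely bookkeeping: making sure that swapping the arguments converts a ceiling at an endpoint into a floor (via $\ceil{-x}=-\floor{x}$) rather than into another ceiling, and that the $\pm 1$ boundary corrections align correctly with the zero set of the Wronskian, which by (b) is the same on both sides. Once the two swap rules are written down, the three identities become algebraic consequences of Lemma~\ref{lem:WnodeDelta}; no further input from the Pr\"ufer analysis is required.
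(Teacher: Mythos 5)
Your proposal is correct and follows exactly the paper's own route: the paper's proof is the one-line instruction ``Use $\ceil{x}=-\floor{-x}$ and Lemma~\ref{lem:WnodeDelta}'', and you have simply carried out that computation in detail, including the correct observation that the endpoint corrections are keyed to the common zero set $\Delta(j)/\pi\in\Z$ of $W(u_0,u_1)$ and $W(u_1,u_0)$.
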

\begin{proof} Use $\ceil{x}=-\floor{-x}$ and Lemma~\ref{lem:WnodeDelta}.
\end{proof}

\begin{proof} [Proof of Theorem~\ref{thm:main}] By Lemma~\ref{lem:ev} and Lemma~\ref{lem:WnodeDelta} we have
\be\begin{array}{l}
E_{(-\infty,\lam_1)}(J_1) - E_{(-\infty,\lam_0]}(J_0) \\
\quad= \ceil{\Delta_{s_{0,\pm}(\lam_0),s_{1,\mp}(\lam_1)}(N)/\pi} - \floor{\Delta_{s_{0,\pm}(\lam_0),s_{1,\mp}(\lam_1)}(0)/\pi}-1 \\
\quad = \#_{(0,N]}(s_{0,\pm}(\lam_0),s_{1,\mp}(\lam_1)) = \#_{(0,N]}(u_{0,\pm}(\lam_0),u_{1,\mp}(\lam_1)).
\end{array}\ee
Equations (\ref{eq:main2}) can be shown analogously.
\end{proof}

\begin{remark} By Theorem~\ref{thm:main} we have
\be \#_{[0,N]}(u_{0,\pm}(\lam),u_{1,\mp}(\lam_1)) = - \#_{[0,N]}(u_{1,\pm}(\lam),u_{0,\mp}(\lam)), \ee
$$\begin{array}{l}
\#_{[0,N]}(u_{0,+}(\lam),u_{3,-}(\lam)) \\
\quad = \#_{[0,N)}(u_{0,+}(\lam),u_{1,-}(\lam)) + \#_{[0,N]}(u_{1,-}(\lam),u_{2,+}(\lam)) + \#_{(0,N]}(u_{2,+}(\lam),u_{3,-}(\lam)),
\end{array}$$
and
$$\begin{array}{l}
\#_{[0,N]}(u_{0,-}(\lam),u_{3,+}(\lam)) \\
\quad = \#_{(0,N]}(u_{0,-}(\lam),u_{1,+}(\lam)) + \#_{[0,N]}(u_{1,+}(\lam),u_{2,-}(\lam)) + \#_{[0,N)}(u_{2,-}(\lam),u_{3,+}(\lam)).
\end{array}$$
\end{remark}

\section{Triangle Inequality and Comparison Theorem} \label{sec:comp}

In this section we establish the Triangle Inequality and the Comparison Theorem for Wronskians which generalize Theorem 5.12 and Theorem 5.13 from \cite{thesis} to different $a$'s. Moreover, Theorem~\ref{thm:nodesWu} generalizes and sharpens Theorem 5.11 from \cite{thesis}.

\begin{theorem} [Comparison Theorem for Wronskians \RM{1}] Let $J_1\ge J_2$, then,
\be \#_{[0,N]}(u_{0,\pm}(\lam),u_{2,\mp}(\lam)) \ge \#_{[0,N]}(u_{0,\pm}(\lam),u_{1,\mp}(\lam)), \ee
where $\#_{[0,N]}$ can be replaced by $\#_{(0,N]}$, $\#_{[0,N)}$, or $\#_{(0,N)}$.
\end{theorem}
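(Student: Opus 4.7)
The plan is to convert both Wronskian node counts into differences of eigenvalue counting functions via Theorem~\ref{thm:main} (applied with the same spectral parameter $\lam_0=\lam_1=\lam$) and then invoke the min--max principle.

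Concretely, for the choice $(\pm,\mp)=(+,-)$ and the closed interval $\#_{[0,N]}$, the first equality of (\ref{eq:main2}) gives
\be
\#_{[0,N]}(u_{0,+}(\lam),u_{j,-}(\lam)) = E_{(-\infty,\lam)}(J_j) - E_{(-\infty,\lam)}(J_0), \qquad j=1,2.
\ee
Subtracting the $j=1$ identity from the $j=2$ identity, the common term $E_{(-\infty,\lam)}(J_0)$ cancels and one obtains
\be
\#_{[0,N]}(u_{0,+}(\lam),u_{2,-}(\lam)) - \#_{[0,N]}(u_{0,+}(\lam),u_{1,-}(\lam)) = E_{(-\infty,\lam)}(J_2) - E_{(-\infty,\lam)}(J_1).
\ee
Since $J_1\ge J_2$ as symmetric matrices, the Courant--Fischer min--max characterization yields $\lam_k(J_1)\ge\lam_k(J_2)$ for every $k$, so $E_{(-\infty,\lam)}(J_1)\le E_{(-\infty,\lam)}(J_2)$ and the right-hand side is non-negative. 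This is exactly the claimed inequality.

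The choice $(\pm,\mp)=(-,+)$ is handled identically by using the second equality in (\ref{eq:main2}), which produces $E_{(-\infty,\lam]}$ instead of $E_{(-\infty,\lam)}$; the same monotonicity argument applied to the half-closed interval gives non-negativity. For the remaining three boundary variants $\#_{(0,N]}$, $\#_{[0,N)}$, $\#_{(0,N)}$, I would read off the corresponding equalities from (\ref{eq:main}) and (\ref{eq:main2}); in every case the $E(J_0)$ contributions agree on the two sides of the subtraction and cancel, leaving either $E_{(-\infty,\lam)}(J_2)-E_{(-\infty,\lam)}(J_1)$ or $E_{(-\infty,\lam]}(J_2)-E_{(-\infty,\lam]}(J_1)$, both non-negative by the same argument.

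I do not anticipate any serious obstacle: once Theorem~\ref{thm:main} is in hand the proof reduces to a two-line computation plus the elementary fact $J_1\ge J_2 \Longrightarrow \lam_k(J_1)\ge\lam_k(J_2)$. The only point requiring a little care is the bookkeeping step of matching each of the four $\#$-variants to exactly the correct line of (\ref{eq:main})--(\ref{eq:main2}) so that the $J_0$-contributions agree on both sides of the subtraction and really cancel; otherwise one would be left with a spurious $E_{(-\infty,\lam)}(J_0)-E_{(-\infty,\lam]}(J_0)$-type term.
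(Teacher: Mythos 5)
Your proposal is correct and is essentially the paper's own proof: the paper likewise sets $\lam_0=\lam_1=\lam$ in Theorem~\ref{thm:main}, cancels the common $E(J_0)$ term, and deduces $E_{(-\infty,\lam)}(J_2)\ge E_{(-\infty,\lam)}(J_1)$ (resp.\ with $(-\infty,\lam]$) from the eigenvalue monotonicity implied by $J_1\ge J_2$. The only quibble is bookkeeping: the identity $\#_{[0,N]}(u_{0,+}(\lam),u_{j,-}(\lam))=E_{(-\infty,\lam)}(J_j)-E_{(-\infty,\lam)}(J_0)$ comes from the second (unlabelled) line of the display, not from the first equality of (\ref{eq:main2}), but this does not affect the argument.
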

\begin{proof} Let $\sig(J_1)=\{\lam_1, \dots, \lam_{N-1}\}$ and $\sig(J_2)=\{\ti\lam_1, \dots, \ti\lam_{N-1}\}$, then $\lam_i\ge \ti\lam_i$ for all $i$ by $J_1\ge J_2$, cf.~\cite[Theorem~8.7.1]{lancaster}, and hence we have $E_{(-\infty,\lam)}(J_2) \ge E_{(-\infty,\lam)}(J_1)$. Thus, by Theorem~\ref{thm:main}
\be\begin{array}{l}
\#_{[0,N]}(u_{0,+}(\lam),u_{2,-}(\lam)) = E_{(-\infty,\lam)}(J_2) - E_{(-\infty,\lam)}(J_0) \\
\quad \ge E_{(-\infty,\lam)}(J_1) - E_{(-\infty,\lam)}(J_0) = \#_{[0,N]}(u_{0,+}(\lam),u_{1,-}(\lam)).
\end{array}\ee
The other claims follow analogously from $E_{(-\infty,\lam]}(J_2) \ge E_{(-\infty,\lam]}(J_1)$ and Theorem~\ref{thm:main}.
\end{proof}

\begin{corollary} Let $a_0=a_1=a_2$ and $b_0(j)\ge b_1(j)\ge b_2(j)$ for all $j=1, \dots, N-1$. If $0$ and $N-1$ are positive nodes of $W(u_{0,\pm}(\lam),u_{1,\mp}(\lam))$, then $W(u_{0,\pm}(\lam),u_{2,\mp}(\lam))$ has at least two positive nodes at $0, \dots, N-1$.
\end{corollary}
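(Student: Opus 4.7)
The plan is to reduce the corollary to the Comparison Theorem for Wronskians I proved just above, but only after upgrading the latter to a statement about \emph{positive} nodes specifically. Under the present hypothesis, every weighted node of the Wronskians involved turns out to be of type $\up$, so the signed counts $\#_{[0,N]}$ coincide with honest counts of positive nodes in $\{0,\ldots,N-1\}$.

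First, since $a_0=a_1=a_2$, each difference $J_i-J_{i+1}$ is diagonal with non-negative entries $b_i(j)-b_{i+1}(j)$; hence $J_0\ge J_1\ge J_2$. The Comparison Theorem for Wronskians I then immediately yields
$$\#_{[0,N]}(u_{0,\pm}(\lam),u_{2,\mp}(\lam))\ge\#_{[0,N]}(u_{0,\pm}(\lam),u_{1,\mp}(\lam)),$$
together with the analogous inequalities for the other three counting conventions.

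The key step is the monotonicity claim: whenever $\Delta a\equiv 0$ and $\Delta b\ge 0$, every node of $W(u_0,u_1)$ is $\up$. Plugging $\Delta a=0$ into (\ref{eq:greenW1}) leaves $W_{n+1}-W_n=\Delta b(n+1)\,u_0(n+1)\,u_1(n+1)$, so if $\Delta b(n+1)>0$ the sign of $W_{n+1}-W_n$ equals that of $u_0(n+1)u_1(n+1)$. Checking the three candidate situations of Lemma~\ref{lem:Wnodes} one by one (the sign change $W_nW_{n+1}<0$; the boundary case $W_n=0\ne W_{n+1}$; and $W_n\ne 0=W_{n+1}$), one verifies in each case that the sign combination forced by Green's formula violates the $\down$ criterion while being compatible with $\up$ or $\nc$. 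If $\Delta b(n+1)=0$ then $W_{n+1}=W_n$, giving $\nc$ trivially. The same argument applies verbatim to $W(u_0,u_2)$.

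With this monotonicity in hand, $\#_{[0,N]}(u_{0,\pm}(\lam),u_{j,\mp}(\lam))$ literally equals the number of positive nodes in $\{0,\ldots,N-1\}$ for $j=1,2$. The hypothesis $\#_0=\#_{N-1}=+1$ for $W(u_{0,\pm}(\lam),u_{1,\mp}(\lam))$ thus forces $\#_{[0,N]}(u_{0,\pm},u_{1,\mp})\ge 2$, and by the comparison $\#_{[0,N]}(u_{0,\pm},u_{2,\mp})\ge 2$, yielding at least two positive nodes of $W(u_{0,\pm}(\lam),u_{2,\mp}(\lam))$ in $\{0,\ldots,N-1\}$, as claimed. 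I expect the hardest part to be the monotonicity step: although morally a discrete Sturm-type comparison, extracting it from the paper's framework requires a somewhat intricate case check matching $W_{n+1}-W_n=\Delta b(n+1)u_0(n+1)u_1(n+1)$ against each sub-case of Lemma~\ref{lem:Wnodes}. A minor technicality is that the formula at $n+1=N$ uses $\Delta b(N)$, which is not constrained by the hypothesis; however $b_j(N)$ does not affect $J_j$, so one may choose the ambient extension to satisfy $b_0(N)\ge b_1(N)\ge b_2(N)$ without loss of generality (and the right Dirichlet condition on one of the solutions makes the corresponding term vanish anyway).
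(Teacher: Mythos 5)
Your argument is correct and follows exactly the route the paper intends for this corollary: $a_0=a_1=a_2$ together with $b_0\ge b_1\ge b_2$ gives $J_0\ge J_1\ge J_2$, so Comparison Theorem for Wronskians \RM{1} yields $\#_{[0,N]}(u_{0,\pm},u_{2,\mp})\ge\#_{[0,N]}(u_{0,\pm},u_{1,\mp})\ge 2$, and your verification via (\ref{eq:greenW1}) that $\Delta a\equiv 0$, $\Delta b\ge 0$ rules out $\down$ at every $n$ is precisely the fact the paper invokes (without detail) for condition B of Comparison Theorem \RM{2}, turning the signed counts into honest counts of positive nodes. Nothing essential is missing.
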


\begin{theorem} \label{thm:nodesWu} Let $m<n$, then
\be \abs{\#_{[m,n]}(u_0,u_1) - (\#_{(m,n)}(u_1) - \#_{(m,n)}(u_0))} \leq 1, \ee
where $\#_{[m,n]}$ can be replaced by $\#_{(m,n]}$ or $\#_{[m,n)}$.
\end{theorem}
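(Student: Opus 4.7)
The plan is to translate everything into Pr\"ufer angles and reduce the inequality to an elementary floor/ceiling identity. Write $\Delta=\theta_1-\theta_0$. By Theorem~\ref{thm:countnodesu},
\[
\#_{(m,n)}(u_1)-\#_{(m,n)}(u_0) = \bigl(\ceil{\theta_1(n)/\pi}-\ceil{\theta_0(n)/\pi}\bigr)-\bigl(\floor{\theta_1(m)/\pi}-\floor{\theta_0(m)/\pi}\bigr),
\]
while \eqref{eq:WnodeDelta} gives $\#_{[m,n]}(u_0,u_1)=\ceil{\Delta(n)/\pi}-\ceil{\Delta(m)/\pi}$. Subtracting and regrouping by endpoint, the quantity inside the absolute value in the theorem splits as $f(x_n,y_n)+g(x_m,y_m)$, where $x_\ell:=\theta_1(\ell)/\pi$, $y_\ell:=\theta_0(\ell)/\pi$, and
\[
f(x,y):=\ceil{x-y}-\ceil{x}+\ceil{y}, \qquad g(x,y):=\floor{x}-\floor{y}-\ceil{x-y}.
\]

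The key step is to verify that $f\in\{0,1\}$ and $g\in\{-1,0\}$ for all $x,y\in\R$. Both drop out of the identity $\ceil{x-y}=\floor{x}-\floor{y}+\ceil{\{x\}-\{y\}}$, where $\{t\}:=t-\floor{t}\in[0,1)$: one reads off $g=-\ceil{\{x\}-\{y\}}$, and, combined with $\ceil{t}=\floor{t}+[\{t\}>0]$ (Iverson bracket), $f=[\{x\}>\{y\}]-[\{x\}>0]+[\{y\}>0]$. Since $\{x\}-\{y\}\in(-1,1)$ gives $\ceil{\{x\}-\{y\}}\in\{0,1\}$ at once, this yields $g\in\{-1,0\}$; a short case split on whether $\{x\},\{y\}$ vanish gives $f\in\{0,1\}$. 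As $f+g\in\{-1,0,1\}$, the first inequality follows.

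For $\#_{(m,n]}$ and $\#_{[m,n)}$ the same strategy applies, invoking the remaining two formulas of Lemma~\ref{lem:WnodeDelta}. After the analogous rearrangement, one arrives at $f(x_n,y_n)+h(x_m,y_m)-1$ in the first case, with $h(x,y):=\floor{x}-\floor{y}-\floor{x-y}\in\{0,1\}$, and at $k(x_n,y_n)+g(x_m,y_m)+1$ in the second, with $k(x,y):=\floor{x-y}-\ceil{x}+\ceil{y}\in\{-1,0\}$; in both cases the total again lies in $\{-1,0,1\}$.

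The whole argument is essentially book-keeping and has no serious obstacle; the one spot that genuinely requires care is the verification that $f,g,h,k$ each attain only a two-element range, which is handled uniformly by the fractional-part identity above.
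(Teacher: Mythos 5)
Your proof is correct and takes essentially the same route as the paper's: both translate the node counts into Pr\"ufer angles via Theorem~\ref{thm:countnodesu} and (\ref{eq:WnodeDelta})/Lemma~\ref{lem:WnodeDelta}, regroup by endpoint, and reduce to the elementary bounds $0\le\ceil{x-y}-(\ceil{x}-\ceil{y})\le 1$ and $-1\le\floor{x-y}-(\floor{x}-\floor{y})\le 0$. Your fractional-part/Iverson-bracket verification of these bounds (and of the ranges of $f,g,h,k$) is simply a more explicit account of what the paper states without proof.
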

\begin{proof} For all $x,y\in\R$ we have
\be 0 \leq \ceil{x-y} - (\ceil{x} - \ceil{y}) \leq  1 \quad\text{and}\quad -1\leq \floor{x-y} - (\floor{x} - \floor{y}) \leq 0. \ee
Hence, by (\ref{eq:WnodeDelta}), Theorem~\ref{thm:countnodesu}, and $-\ceil{x}=\floor{-x}$ we have
$$\begin{array}{l}
\abs{\#_{[m,n]}(u_0, u_1) - (\#_{(m,n)}(u_1) - \#_{(m,n)}(u_0))} \\
\quad =\abs{\ceil{\frac{\theta_1(n)-\theta_0(n)}{\pi}} - (\ceil{\frac{\theta_1(n)}{\pi}} - \ceil{\frac{\theta_0(n)}{\pi}}) + \floor{\frac{\theta_0(m)-\theta_1(m)}{\pi}} - (\floor{\frac{\theta_0(m)}{\pi}} - \floor{\frac{\theta_1(m)}{\pi}})} \\
\quad \le 1.
\end{array}$$
Moreover, by Lemma~\ref{lem:WnodeDelta} and Theorem~\ref{thm:countnodesu} we have
\be\begin{array}{l}
\#_{(m,n]}(u_0, u_1) - (\#_{(m,n)}(u_1) - \#_{(m,n)}(u_0)) \\
\quad =\ceil{\frac{\Delta(n)}{\pi}} - (\ceil{\frac{\theta_1(n)}{\pi}} - \ceil{\frac{\theta_0(n)}{\pi}}) - (\floor{\frac{\Delta(m)}{\pi}} - (\floor{\frac{\theta_1(m)}{\pi}} - \floor{\frac{\theta_0(m)}{\pi}})) - 1
\end{array}\ee
and
$$\begin{array}{l}
\#_{[m,n)}(u_0, u_1) - (\#_{(m,n)}(u_1) - \#_{(m,n)}(u_0)) \\
= 1 + \floor{\frac{\theta_0(m)-\theta_1(m)}{\pi}} - (\floor{\frac{\theta_0(m)}{\pi}} - \floor{\frac{\theta_1(m)}{\pi}}) - (\ceil{\frac{\theta_0(n)-\theta_1(n)}{\pi}} - (\ceil{\frac{\theta_0(n)}{\pi}} - \ceil{\frac{\theta_1(n)}{\pi}})).
\end{array}$$
\end{proof}

\begin{theorem}[Triangle Inequality for Wronskians]\label{thm:triangle} We have
\be \abs{\#_{[m,n]}(u_0,u_2) - (\#_{[m,n]}(u_0,u_1) + \#_{[m,n]}(u_1,u_2))} \leq 1, \ee
where $\#_{[m,n]}$ can be replaced by $\#_{(m,n]}$.
\end{theorem}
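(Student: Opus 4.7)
The plan is to reduce the inequality to elementary bounds on the ceiling and floor functions under addition, exploiting the simple additivity of the Prüfer angle differences. Write $\theta_j$ for the Prüfer angle of $u_j$ and set $\Delta_{ij}:=\theta_j-\theta_i$. Then obviously
\[
\Delta_{02}(k) = \Delta_{01}(k) + \Delta_{12}(k)
\]
for every $k\in\Z$; this is the single algebraic identity that drives the whole argument.

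First I would recall the ceiling/floor representation of weighted node counts. By (\ref{eq:WnodeDelta}) and Lemma~\ref{lem:WnodeDelta},
\[
\#_{[m,n]}(u_i,u_j) = \lceil \Delta_{ij}(n)/\pi\rceil - \lceil\Delta_{ij}(m)/\pi\rceil,\qquad \#_{(m,n]}(u_i,u_j) = \lceil\Delta_{ij}(n)/\pi\rceil - \lfloor\Delta_{ij}(m)/\pi\rfloor - 1.
\]
Next I would verify the elementary arithmetic inequalities
\[
\lceil x+y\rceil - \lceil x\rceil - \lceil y\rceil \in \{-1,0\},\qquad \lfloor x+y\rfloor - \lfloor x\rfloor - \lfloor y\rfloor \in \{0,1\},
\]
valid for all $x,y\in\R$ (a half-line of each type of strict inequality together with the integer bound on the fractional parts).

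Now the main step is a telescoping computation. Set
\[
A(k) := \lceil\Delta_{02}(k)/\pi\rceil - \lceil\Delta_{01}(k)/\pi\rceil - \lceil\Delta_{12}(k)/\pi\rceil,\qquad B(k) := \lfloor\Delta_{02}(k)/\pi\rfloor - \lfloor\Delta_{01}(k)/\pi\rfloor - \lfloor\Delta_{12}(k)/\pi\rfloor,
\]
so that the arithmetic inequalities above give $A(k)\in\{-1,0\}$ and $B(k)\in\{0,1\}$. Then subtracting the representations yields
\[
\#_{[m,n]}(u_0,u_2) - \#_{[m,n]}(u_0,u_1) - \#_{[m,n]}(u_1,u_2) = A(n) - A(m)\in\{-1,0,1\},
\]
which is the first claim. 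For the half-open interval,
\[
\#_{(m,n]}(u_0,u_2) - \#_{(m,n]}(u_0,u_1) - \#_{(m,n]}(u_1,u_2) = A(n) - B(m) + 1,
\]
and since $A(n)\in\{-1,0\}$ while $B(m)\in\{0,1\}$, the right-hand side lies in $\{-1,0,1\}$, finishing the proof.

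There is essentially no obstacle beyond correctly collecting the constants in the half-open case; the only subtle point is keeping track of the $-1$ in the definition of $\#_{(m,n]}$ and the asymmetric ranges $\{-1,0\}$ versus $\{0,1\}$ of the ceiling/floor discrepancies, which conspire so that the combined shift $A(n)-B(m)+1$ still stays within $[-1,1]$.
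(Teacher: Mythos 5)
Your proof is correct and follows essentially the same route as the paper: both rest on the additivity $\Delta_{0,2}=\Delta_{0,1}+\Delta_{1,2}$, the ceiling/floor representations of the node counts from (\ref{eq:WnodeDelta}) and Lemma~\ref{lem:WnodeDelta}, and the elementary subadditivity bounds $\lceil x+y\rceil\le\lceil x\rceil+\lceil y\rceil\le\lceil x+y\rceil+1$ and $\lfloor x+y\rfloor-1\le\lfloor x\rfloor+\lfloor y\rfloor\le\lfloor x+y\rfloor$. Your packaging of the discrepancies into the quantities $A(k)\in\{-1,0\}$ and $B(k)\in\{0,1\}$ is only a cosmetic reorganization of the paper's two-sided estimates.
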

\begin{proof} Abbreviate $\Delta_{i,j}:=\Delta_{u_i,u_j}$, then $\Delta_{0,1} + \Delta_{1,2} = \Delta_{0,2}$. By (\ref{eq:WnodeDelta}) we have $\#_{[m,n]}(u_0,u_2) = \ceil{\Delta_{0,2}(n)/\pi} - \ceil{\Delta_{0,2}(m)/\pi}$, hence
\be\begin{array}{l}
\#_{[m,n]}(u_0,u_1) + \#_{[m,n]}(u_1,u_2) \\
\quad \leq \ceil{\Delta_{0,2}(n)/\pi}+1 - \ceil{\Delta_{0,2}(m)/\pi} = \#_{[m,n]}(u_0,u_2) + 1, \\
\#_{[m,n]}(u_0,u_1) + \#_{[m,n]}(u_1,u_2) \\
\quad \geq \ceil{\Delta_{0,2}(n)/\pi} - (\ceil{\Delta_{0,2}(m)/\pi}+1) = \#_{[m,n]}(u_0,u_2) - 1
\end{array}\ee
holds by $\ceil{x+y} \leq \ceil{x}+\ceil{y}\leq \ceil{x+y}+1$ for all $x,y\in \R$. Further, by Lemma~\ref{lem:WnodeDelta} and $\floor{x+y}-1\le\floor{x} + \floor{y}\le\floor{x+y}$ we have
\be\begin{array}{l}
\#_{(m,n]}(u_0,u_1) + \#_{(m,n]}(u_1,u_2) \\
\quad \le \ceil{\Delta_{0,2}(n)/\pi} - \floor{\Delta_{0,2}(m)/\pi} = \#_{(m,n]}(u_0,u_2) + 1 \\
\end{array}\ee
and $\#_{(m,n]}(u_0,u_2) \leq \#_{(m,n]}(u_0,u_1) + \#_{(m,n]}(u_1,u_2) + 1$.
\end{proof}

\begin{theorem}[Comparison Theorem for Wronskians \RM{2}] If either
\begin{description}
\item [A] $W_j(u_0,u_1) u_0(j+1) u_1(j+1)\le 0$ and $W_j(u_1,u_2) u_1(j+1) u_2(j+1)\le 0$ \\
\quad for all $j=0, \dots, N-2$ or
\item [B] $a_0=a_1=a_2$ and $b_0(j)\ge b_1(j)\ge b_2(j)$ for all $j=1, \dots N-1$
\end{description}
holds and $0$ and $N-2$ are (positive) nodes of $W(u_0,u_1)$, then $W(u_0,u_2)$ has at least one positive node at $0, \dots, N-2$.
\end{theorem}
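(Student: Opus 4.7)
My plan is to reduce both cases to a single application of the Triangle Inequality for Wronskians (Theorem~\ref{thm:triangle}):
\[ \#_{[0,N-1]}(u_0,u_2) \ge \#_{[0,N-1]}(u_0,u_1) + \#_{[0,N-1]}(u_1,u_2) - 1. \]
Once the right-hand side is at least $1$, we are done: a net-positive signed node count forces at least one summand $\#_j(u_0,u_2)=+1$, i.e.\ a positive node of $W(u_0,u_2)$ in $\{0,\dots,N-2\}$. The target inequalities are therefore $\#_{[0,N-1]}(u_0,u_1)\ge 2$ and $\#_{[0,N-1]}(u_1,u_2)\ge 0$, both of which follow as soon as neither $W(u_0,u_1)$ nor $W(u_1,u_2)$ has a down node ($\#_j=-1$) in $\{0,\dots,N-2\}$: the two assumed positive nodes of $W(u_0,u_1)$ at $0$ and $N-2$ then contribute $+2$ to a sum of nonnegative terms, and every summand in $\#_{[0,N-1]}(u_1,u_2)$ is nonnegative.

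Case (A) is immediate from Lemma~\ref{lem:Wnodes}: a down node $\#_j(u_0,u_1)=-1$ would require $W_j(u_0,u_1)\, u_0(j+1)u_1(j+1)>0$, which is precisely what the first clause of hypothesis (A) forbids; the second clause handles $W(u_1,u_2)$.

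Case (B) is where the actual work lies. I would use Green's formula (\ref{eq:greenW1}) specialised to $\Delta a\equiv 0$,
\[ W_{n+1}(u_0,u_1) - W_n(u_0,u_1) = \Delta b(n+1)\, u_0(n+1)\, u_1(n+1). \]
If a down node occurred at some $n$, Lemma~\ref{lem:Wnodes} would force $W_n(u_0,u_1)$ and $u_0(n+1) u_1(n+1)$ to share a sign; combined with $\Delta b(n+1)\ge 0$, the increment $W_{n+1}-W_n$ then has the same weak sign as $W_n$, so $W_{n+1}$ is nonzero and of the same sign as $W_n$. This contradicts both remaining defining clauses of a down node, namely $W_n W_{n+1}<0$ and $(W_n\ne 0$, $W_{n+1}=0)$. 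The identical argument, using $b_1\ge b_2$, rules out down nodes of $W(u_1,u_2)$.

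The main obstacle I expect is spotting that Green's formula is the right tool for (B): the hypothesis $\Delta a\equiv 0$ eliminates the mixed cross-term $u_0(j+1) u_1(j) + u_0(j) u_1(j+1)$, so the sign of the Wronskian increment is governed exactly by the product $u_0(n+1) u_1(n+1)$ that appears in the down-node criterion of Lemma~\ref{lem:Wnodes}. Once this alignment is noticed, both hypotheses reduce to the same Triangle-Inequality argument.
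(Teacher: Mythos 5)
Your proposal is correct and follows essentially the same route as the paper: rule out down nodes in both cases so that $\#_{[0,N-1]}(u_0,u_1)\ge 2$ and $\#_{[0,N-1]}(u_1,u_2)\ge 0$, then apply the Triangle Inequality (Theorem~\ref{thm:triangle}) to get $\#_{[0,N-1]}(u_0,u_2)\ge 1$. The paper states the non-negativity of the summands without justification, whereas you correctly supply it via Lemma~\ref{lem:Wnodes} for case (A) and via (\ref{eq:greenW1}) with $\Delta a\equiv 0$ for case (B).
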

\begin{proof} In either case we have $\#_j(u_0,u_1)\ge 0$ and $\#_j(u_1,u_2)\ge 0$ for all $j=0, \dots, N-2$ and hence from Theorem~\ref{thm:triangle} we conclude
\be \#_{[0,N-1]}(u_0,u_2)\ge \underbrace{\#_{[0,N-1]}(u_0,u_1)}_{\ge 2}+\underbrace{\#_{[0,N-1]}(u_1,u_2)}_{\ge 0}-1. \ee
\end{proof}

\noindent
{\bf Acknowledgments.}
I wish to thank Gerald Teschl for several useful discussions.

\end{document}